\def\acts{\curvearrowright}
\def\Ne{{\mathcal N}}
\def\Q{\mathbb Q}
\def\T{{\mathbb T}}
\def\P{\mathbb P}
\def\hbar{h}
\def\Oc{\mathcal O}
\DeclareMathOperator\RRep{Rep}
\DeclareMathOperator\R{\mathbb R}
\DeclareMathOperator\C{\mathbb C}
\DeclareMathOperator\Z{\mathbb Z}
\DeclareMathOperator\N{\mathbb N}
\DeclareMathOperator\I{\mathcal I}
\DeclareMathOperator\Sym{Sym}
\DeclareMathOperator\csm{c^{sm}}
\DeclareMathOperator\gr{Gr}
\DeclareMathOperator\Hom{\mathrm Hom}
\DeclareMathOperator\codim{codim}
\DeclareMathOperator\pt{pt}
\DeclareMathOperator{\GL}{GL}
\DeclareMathOperator\Fl{Fl}
\DeclareMathOperator\F{\mathcal F}
\DeclareMathOperator\mC{mC}
\DeclareMathOperator\mS{mS}
\newtheorem{fact}{Fact}[section]
\newtheorem{theorem}[fact]{Theorem}
\newtheorem{definition}[fact]{Definition}
\newtheorem{example}[fact]{Example}
\newtheorem{rremark}[fact]{Remark}
\newenvironment{remark}{\begin{rremark} \rm}{\end{rremark}}
\newtheorem{proposition}[fact]{Proposition}
\newtheorem{corollary}[fact]{Corollary}
\newtheorem{conjecture}[fact]{Conjecture}
\newtheorem{property}[fact]{Property}
\def\charr{characteristic }
\def\tilde#1{\widetilde{#1}}
\def\obrazek#1#2{\pgfdeclareimage[height=#2]{#1}{#1}
\pgfuseimage{#1}}
\author{L\'aszl\'o M. Feh\'er}
\address{Institute of Mathematics, E\"otv\"os University Budapest, Hungary
\newline
 \indent        Alfr\'ed R\'enyi Institute of Mathematics, Hungarian Academy of Sciences}
\email{lfeher63@gmail.com}
\author{Rich\'ard Rim\'anyi}
\address{Department of Mathematics, University of North Carolina at Chapel Hill, USA}
\email{rimanyi@email.unc.edu}
\author{Andrzej Weber}
\address{Institute of Mathematics, University of Warsaw, Poland}
\email{aweber@mimuw.edu.pl}
\title{Characteristic classes of orbit stratifications, the axiomatic approach}
\begin{document}

\begin{abstract}
Consider a complex algebraic group $G$ acting on a smooth variety $M$ with finitely many orbits, and let $\Omega$ be an orbit.
The following three invariants of $\Omega\subset M$ can be characterized axiomatically: (1) the equivariant fundamental class $[\overline{\Omega}, M]\in H^*_G(M)$, (2) the equivariant Chern-Schwartz-MacPherson class $\csm(\Omega, M)\in H^*_G(M)$, and (3) the equivariant motivic Chern  class $\mC(\Omega, M) \in K_G(M)[y]$.
The axioms for Chern-Schwartz-MacPherson and motivic Chern classes are motivated by the axioms for cohomological and K-theoretic stable envelopes of Okounkov and his coauthors.
For $M$ a flag variety and $\Omega$ a Schubert cell---an orbit of the Borel group acting---this implies that CSM and MC classes coincide with the weight functions studied by
Rim\'anyi-Tarasov-Varchenko.
In this paper we review the general theory and illustrate it with examples.
\end{abstract}
\maketitle

\section{Introduction}

An effective way of studying the subvariety $X$ of a smooth complex variety $M$ is assigning a \charr class to $X\subset M$, living in the cohomology or K-theory of the ambient space $M$.
When $M$ is a $G$-representation and $X$ is invariant, then the \charr class can be considered in the richer $G$-equivariant cohomology or K-theory of $M$.

In this paper we consider three flavors of \charr classes: the fundamental class (in cohomology), the Chern-Schwartz-MacPherson (CSM) class (in cohomology) and the motivic Chern class (MC class) in K-theory. These \charr classes encode fine geometric and enumerative properties of $X\subset M$, and their theory overlaps with the recent advances relating geometry with quantum integrable systems.

We will be concerned with the following situation: a complex algebraic group $G$ acts on a smooth variety $M$ with finitely many orbits, and we want to find the \charr classes listed above associated to the orbits or their closures. This situation is frequent in Schubert calculus and other branches of enumerative geometry. By the nature how \charr classes are defined, such calculation assumes resolutions of the singularities of the orbit closures. The main message of the theorems we review in Sections \ref{sec-fund}, \ref{sec:CSM}, \ref{sec:MC}, is that such a resolution calculation can be replaced with an axiomatic approach for all three
types of classes mentioned above. For the fundamental class this fact has been known for two decades \cite{Rimanyi, FRbrasil}, but for the other two mentioned
classes this is a very recent development stemming from the notion of Okounkov's stable envelopes ~\cite{OkounkovK}.

As an application of the axiomatic approach the \charr classes of ordinary and matrix Schubert cells can be computed (in type A, for arbitrary partial flag manifolds)---we present the formulas in Section \ref{sec:Fl}. For the motivic Chern class one of the axioms concerns convex geometry, namely the containment of one convex polytope in another one. Throughout our exposition  we try to illustrate this fascinating connection between complex geometry and the theory of convex polytopes with 2- and 3-dimensional pictures.

It is a classical fact that \charr classes of geometrically relevant varieties often exhibit positivity (or alternating sign) properties.
Having computed lots of examples of equivariant motivic Chern classes, we observe these properties for the motivic Chern class as well. In Section~\ref{sec:pos} we collect three
different positivity conjectures on MC classes of Schubert cells. One of these conjectures is a K-theoretic counterpart of properties of CSM classes studied by Aluffi-Mihalcea-Sch\"urmann-Su \cite{AM, AMSS}.  Another one of our conjectures is a new phenomenon for \charr classes: not the sign of some coefficients are conjectured, but rather that the coefficient polynomials are log-concave, cf. \cite{Huh}.

We finish the paper with studying the \charr classes of varieties in the source and in the target of a GIT map. The fundamental class of an invariant  subvariety and the fundamental class of its image under the GIT map are essentially the same (more precisely, one lives in a quotient ring, and the other one is a representative of it). A familiar fact illustrating this phenomenon is that the fundamental class of both the matrix Schubert varieties and the ordinary Schubert varieties are the Schur functions. However, for the CSM and the MC classes the situation is different: these classes before and after the GIT map are not the same (or rather, one does not represent the other). Yet,  there is direct relation between them, which we prove in Section \ref{sec-laci}.

\smallskip

The goal of most of this paper is to review and illustrate concepts, although in Section \ref{sec-alt} we complement a proof of \cite{FRW} with an alternative argument. The mathematical novelties are the conjectures in Section \ref{sec:pos}, the careful treatment of the pull-back property of motivic Segre classes in Section \ref{sec:trans}, and its consequence, Theorem \ref{te:eq-git} which relates motivic Segre classes of varieties in the source and in the target of a GIT quotient.

\smallskip

\noindent{\bf Acknowledgments.}
L.F. is supported by NKFI grants K 112735 and KKP 126683. R.R. is supported by the Simon Foundation grant 523882. A.W. is supported by NCN grant 2013/08/A/\-ST1/\-00804.

\section{Fundamental class in equivariant cohomology}\label{sec-fund}

By equivariant cohomology we will mean Borel's version, developed in \cite{Bo}. Our general reference is \cite{Bt}.


Suppose a complex algebraic group $G$ acts on the smooth complex algebraic variety $M$. Suppose also that the action has finitely many orbits, and let $\Oc$ be the set of orbits. For an orbit $\Omega \in \Oc$ let $G_\Omega$ denote the stabilizer subgroup of a point in $\Omega$. Then additively we have
$$H^*_G(M;\Q)\simeq \bigoplus_{\Omega\in\Oc} H_G^{*-2\codim(\Omega)}(\Omega;\Q).$$
The decomposition holds because each cohomology group $H_G^{* }(\Omega;\Q)\simeq H^{*}(BG_\Omega;\Q)$ is concentrated in even degrees.  Suppose further that the normal bundle $\nu_\Omega$  of each orbit $\Omega$ has nonzero equivariant Euler class
\begin{equation}\label{Atiyahcondition}
e(\nu_\Omega)\neq 0\in H^*_G(\Omega;\Q).
\end{equation}
Then a class in $H^*_G(M;\Q)$ is determined by the restrictions to orbits, cf. \cite[\S9]{AtBo}. The topic of this paper is that certain characteristic classes associated with orbits are determined by less information: they are determined by some {\em properties} of their restrictions to the orbits.

\begin{example}\rm For $M=\C$, $G=\C^*$ with the natural action on $\C$.  We have the short exact sequence
$$\begin{matrix}0&\longrightarrow &H^{*-2}_{\C^*}(\{0\},\Q)&\longrightarrow&H^{*}_{\C^*}(\C;\Q)&\longrightarrow &H^{*}_{\C^*}(\C^*;\Q)&\longrightarrow &0.\\
&&\parallel&^{t\cdot}&\parallel&^{ev_{t=0}}&\parallel\\
&&\Q[\,t\,]&&\Q[\,t\,]& &\Q\end{matrix}$$
In this example the restriction to $\{0\}$ is obviously an isomorphism. In general, when $M$ is an equivariantly formal space (see \cite{GKM}) and $G$ is a torus, then the restriction to the fixed point set is a monomorphism.
\end{example}

The most natural \charr class associated to an orbit $\Omega$ is the fundamental class of its closure, which we will denote by $[\overline{\Omega}]=[\overline{\Omega}, M]\in H_G^*(M,\Q)$. In an appropriate sense, it is the Poincar\'e dual of the homology fundamental class of $\overline{\Omega}$. Such classes are studied in Schubert calculus under the name of Schubert classes, in singularity theory under the name of Thom polynomials, and in the theory of quivers under the name of  quiver polynomials.
Here is an axiomatic characterization of the equivariant fundamental class, which leads to an effective method to calculate them.

\begin{theorem} [\cite{Rimanyi, FRbrasil}] \label{Thom}
Suppose $M$ has finitely many $G$-orbits and \eqref{Atiyahcondition} holds. Then the equivariant fundamental classes of orbit closures are determined by the conditions:
\begin{enumerate}[(i)]
\item (support condition) the class $[\overline{\Omega}]$ is supported on $\overline{\Omega}$;
\item (normalization condition) $[\overline{\Omega}]_{|\Omega}=e(\nu_\Omega)$;
\item (degree condition) $\deg([\overline{\Omega}])=2\codim(\Omega)$.
\end{enumerate}
We obtain an equivalent system of conditions if we replace condition (i) with any of the following two:
\begin{enumerate}[(i)]
\setcounter{enumi}{3}
\item (homogeneous equations, ver1) $[\overline{\Omega}]_{|\Theta}=0$, if $\Theta\not\subset \overline{\Omega}$,
\item (homogeneous equations, ver2) $[\overline{\Omega}]_{|\Theta}=0$, if $\codim(\Theta)\leq \codim(\Omega)$, $\Theta\not=\Omega$.
\end{enumerate}
We also obtain an equivalent system of conditions if we replace condition (iii) with
\begin{enumerate}[(i)]
\setcounter{enumi}{5}
\item (modified degree condition) $\deg([\Omega])_{|\Theta}<\deg([\Theta])_{|\Theta}$ for $\Theta\neq \Omega$.
\end{enumerate}
\qed \end{theorem}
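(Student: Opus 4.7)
The plan is to break the proof into three pieces: (a) verify the fundamental class satisfies (i)--(iii); (b) prove uniqueness of any class satisfying (i)--(iii); (c) show the stated substitutions produce equivalent characterizations.

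For (a) I would simply invoke standard facts from equivariant intersection theory: the class $[\overline\Omega]$ is by construction supported on $\overline\Omega$ (it is the pushforward of the fundamental class of a $G$-equivariant resolution of $\overline\Omega$), the self-intersection formula gives $[\overline\Omega]_{|\Omega}=e(\nu_\Omega)$, and the degree is $2\codim(\Omega)$ because the equivariant fundamental class of a complex codimension-$c$ subvariety lives in $H^{2c}_G(M;\Q)$.

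For (b) I would exploit the additive decomposition stated just before the theorem. If $c$ also satisfies (i)--(iii), set $d:=c-[\overline\Omega]$ and expand $d=\sum_\Theta d_\Theta$ with $d_\Theta\in H^{*-2\codim(\Theta)}_G(\Theta;\Q)$. Since each $H^*_G(\Theta;\Q)=H^*(BG_\Theta;\Q)$ is concentrated in non-negative degrees, the piece $d_\Theta$ contributes to $H^*_G(M;\Q)$ only in degrees $\ge 2\codim(\Theta)$. The degree condition (iii) applied to $d$ (whose degree is also $2\codim(\Omega)$) therefore forces $d_\Theta=0$ for every $\Theta$ with $\codim(\Theta)>\codim(\Omega)$. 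The support condition (i) forces $d_\Theta=0$ for every $\Theta\not\subset\overline\Omega$. For connected $G$ the orbits are irreducible, so the two constraints leave only $\Theta=\Omega$, where $d_\Omega$ sits in degree $0$ of $H^*(BG_\Omega;\Q)$ and is thus a scalar, pinned to zero by $d_{|\Omega}=0$ (from (ii)). This step is where I expect the real work to be: I want to be careful that the additive decomposition is genuinely compatible with restriction to orbits (essentially the Atiyah--Bott localization machinery guaranteed by \eqref{Atiyahcondition}), so that ``supported on $\overline\Omega$'' translates into ``$d_\Theta=0$ for $\Theta\not\subset\overline\Omega$.''

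For (c) I would handle the three replacements in turn. The equivalence (i)$\Leftrightarrow$(iv) is the Atiyah--Bott principle applied to the open $G$-subvariety $M\setminus\overline\Omega$: its Euler-class condition is inherited from $M$, so a class vanishes on $M\setminus\overline\Omega$ iff all its restrictions to orbits in that open set vanish. The implication (iv)$\Rightarrow$(v) is trivial, while (v)$\Rightarrow$(iv) holds because, for connected $G$, an orbit $\Theta\neq\Omega$ contained in $\overline\Omega$ must satisfy $\codim(\Theta)>\codim(\Omega)$ (irreducibility of $\overline\Omega$); equivalently, the only orbits of codimension $\le\codim(\Omega)$ not equal to $\Omega$ automatically lie outside $\overline\Omega$. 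Finally, for (iii)$\leftrightarrow$(vi) keeping (i) and (ii), I would rerun the argument of (b): the strict inequality $\deg([\Omega]_{|\Theta})<\deg([\Theta]_{|\Theta})=2\codim(\Theta)$ plays exactly the role of the degree bound, ruling out nonzero components $d_\Theta$ with $\Theta\neq\Omega$ in the decomposition of the difference of two candidate classes. The main obstacle, as noted, is to justify cleanly the ``degree lemma''---that a nonzero class supported on a subvariety of codimension $>\codim(\Omega)$ must have degree $>2\codim(\Omega)$---which in this $G$-equivariantly formal setting follows from the decomposition together with \eqref{Atiyahcondition}.
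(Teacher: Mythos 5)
The paper states this theorem with a citation to \cite{Rimanyi, FRbrasil} and no proof, so there is nothing in the paper to compare against directly; I will assess your argument on its own. The overall strategy---existence via resolution/self-intersection, uniqueness via the additive decomposition combined with a degree bound, and the Atiyah--Bott injectivity of restriction to orbits---is the standard and correct one, and parts (a) and (b) are sound (modulo the ``degree lemma'' you rightly flag: the precise statement is that if all restrictions of $d$ to orbits of codimension $<k$ vanish, then $d$ restricted to the open set of orbits of codimension $\le k$ lifts along the Thom--Gysin map from $\bigoplus_{\codim\Theta=k}H^{*-2k}_G(\Theta)$, so its component over any codimension-$k$ orbit has nonnegative degree; this is the filtration argument, not literally ``compatibility of the decomposition with restriction'').

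There is, however, a genuine logical slip in your treatment of (iv) and (v). As conditions on a single cohomology class, (iv) is \emph{strictly stronger} than (v): any orbit $\Theta\neq\Omega$ with $\codim\Theta\le\codim\Omega$ automatically lies outside $\overline\Omega$ (because $\overline\Omega$ has pure dimension $\dim\Omega$ and $\Omega$ is the unique orbit of that dimension in it), so the orbits named by (v) form a subset of those named by (iv). Your justification---``the only orbits of codimension $\le\codim\Omega$ not equal to $\Omega$ automatically lie outside $\overline\Omega$''---is exactly this observation, and it proves (iv)$\Rightarrow$(v), not (v)$\Rightarrow$(iv) as you assert. In fact (v)$\Rightarrow$(iv) is false in general: an orbit $\Theta$ with $\codim\Theta>\codim\Omega$ and $\Theta\not\subset\overline\Omega$ is constrained by (iv) but not by (v). What the theorem actually asserts is that the three \emph{systems} $\{$(i),(ii),(iii)$\}$, $\{$(iv),(ii),(iii)$\}$, $\{$(v),(ii),(iii)$\}$ each determine the class uniquely. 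So the correct argument is: the true class $[\overline\Omega]$ satisfies (iv) and hence (v); and for uniqueness under $\{$(v),(ii),(iii)$\}$, rerun (b): for the difference $d$ of two solutions, (v) together with (ii) gives $d_{|\Theta}=0$ for every orbit with $\codim\Theta\le\codim\Omega$, and then the degree bound from (iii) kills $d$ by the minimal-codimension argument. The replacement (iii)$\to$(vi) you handle correctly in spirit.
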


The advantage of {\em (iv)} or {\em (v)} over {\em (i)} is that they are local conditions, and hence explicitly computable. The modified degree condition is only added to illustrate the similarity with analogous  characterizations of other \charr classes in Sections \ref{sec:CSM}, \ref{sec:MC}.

\begin{example} \rm \label{exabove}
Let $G=\GL_2(\C)\times \GL_3(\C)$ act on the vector space $\Hom(\C^2,\C^3)$ via $(A,B)\cdot X=BXA^{-1}$. The orbits of this action are $\Omega^i=\{X: \dim(\ker(X))=i\}$ for $i=0,1,2$. 
 The fundamental class $[\overline{\Omega^1}]$ is a homogeneous degree 2 polynomial in $\Z[a_1,a_2,b_1,b_2,b_3]^{S_2\times S_3}$ (where $\deg a_i=\deg b_i=1$). The constraints put on this polynomial by the conditions of Theorem \ref{Thom} are $\phi_0([\overline{\Omega^1}])=0$ (condition (v)) and $\phi_1([\overline{\Omega^1}])=(b_2-a_2)(b_3-a_2)$ (condition (ii)), where $\phi_0$ is the ``restriction to $\Omega^0$'' map, namely $a_1\mapsto t_1, a_2\mapsto t_2,b_1\mapsto t_1, b_2\mapsto t_2,b_3\mapsto b_3$, and $\phi_1$ is the ``restriction to $\Omega^1$'' map, namely $a_1\mapsto t_1, a_2\mapsto a_2,b_1\mapsto t_1, b_2\mapsto b_2,b_3\mapsto b_3$. Calculation shows that the only solution to these two constraints is $[\overline{\Omega^1}]=A_1^2-A_2+B_2-A_1B_1$, where $A_i$'s ($B_i$'s) are the elementary symmetric polynomials of the $a_i$'s ($b_i$'s).

More details on this calculation as well as on analogous calculations for `similar' representations (e.g. Dynkin quivers, $\Lambda^2\C^n$, $S^2\C^n$) can be found e.g. in \cite{FRss, FRbrasil, FRduke}. However, these fundamental classes can also be computed by other methods (resolutions, degenerations). The real power of Theorem \ref{Thom} is that it is applicable even in situations where deeper geometric information such as resolutions or degenerations are not known---eg. contact singularities, matroid representations spaces, see \cite{Rimanyi, matroid}.
 \end{example}

\section{Chern-Schwartz-MacPherson classes of orbits}\label{sec:CSM}

Another cohomological \charr class associated with an invariant subset $X$ of $ M$ is the Chern-Schwartz-MacPherson (CSM) class $\csm(X, M)\in H_G^*(M;\Q)$, see e.g. \cite{MacP, Ohmoto, WeCSM, AM} for foundational literature and \cite{FRcsm} for the version we consider. The CSM class is an inhomogeneous class, whose lowest degree component equals the fundamental class $[\overline{X}, M]$.

In general CSM classes are defined for (equivariant) constructible functions on $M$. Here we will only consider CSM classes of locally closed invariant smooth subvarieties of $M$ (corresponding to the indicator functions of such varieties).

According to Aluffi \cite{ACCBV} the
CSM classes can be calculated along the following lines: one finds $Y$ a partial completion of $X$ which maps properly to $M$ and such that $Y\setminus X$ is a smooth divisor with normal crossings $D=\bigcup_{i=1}^m D_i$. That is, we have a diagram
\[
\xymatrix{{X'}\ar[d]_{\simeq}\ar@{^{(}->}[r]& Y \ar[d]^{f}\\
{X}\ar@{^{(}->}[r]& M
}
\]
where $f$ is a proper map. Then
\begin{equation}\label{csmdef}\csm(X , M)=f_*\left(c(T_Y)-\sum_i c(T_{D_i})+\sum_{i<j} c(T_{D_i\cap D_j})-\dots \right).\end{equation}

\medskip
In our situation, this method demanding knowledge about a resolution, can be replaced with an axiomatic characterization.

For an orbit $\Omega\subset M$ let $x_\Omega\in \Omega$, and let $G_\Omega$ be the stabilizer subgroup of $x_\Omega$. Denote $T_\Omega=T_{x_\Omega}\Omega$, and $\nu_\Omega=T_{x_\Omega}M-T_\Omega$ as $G_\Omega$-representations. By the degree of an inhomogeneous cohomology class $a=a_0+a_1+\dots+a_d$ with $a_i\in H^i_G(\Omega)$ and $a_d\neq0$ we mean $d$.

\begin{theorem} [\cite{RV, FRcsm}] \label{csminter}
Suppose $M$ has finitely many $G$-orbits and \eqref{Atiyahcondition} holds. Then the equivariant CSM classes of the orbits are determined by the conditions:
\begin{enumerate}[(i')]
\item (divisibility condition) for any $\Theta$ the restriction $\csm(\Omega, M)_{|\Theta}$ is divisible by $c(T_\Theta)$ in $H^*_G(\Theta;\Q)$;
\item (normalization condition) $\csm(\Omega,  M)_{|\Omega}=e(\nu_\Omega)c(T_\Omega)$;
\item (smallness condition) $\deg(\csm(\Omega, M)_{|\Theta})<\deg(\csm(\Theta , M)_{|\Theta})$ for $\Theta\not=\Omega$.
\end{enumerate}
\qed
\end{theorem}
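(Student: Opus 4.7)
The plan has two stages: first verify that $\csm(\Omega,M)$ satisfies (i')--(iii'), then establish uniqueness.

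For existence I would argue from Aluffi's formula \eqref{csmdef} applied to a $G$-equivariant resolution $f\colon Y\to M$. The normalization (ii') is immediate, since $f$ restricts to an isomorphism above $\Omega$ and the boundary divisors contribute nothing there, giving $\csm(\Omega,M)|_\Omega = c(T_\Omega)\,e(\nu_\Omega)$. The divisibility (i') reflects the general fact that pulling back a MacPherson class to a smooth subvariety $\Theta\hookrightarrow M$ factors through multiplication by $c(T_\Theta)$; this is visible by running Aluffi's formula on the base change $Y\times_M\Theta\to\Theta$, which introduces $c(T_\Theta)$ as an overall factor. For smallness (iii'), one uses that the top cohomological component of $\csm(\Omega,M)$ produces on self-restriction the class $c_{\dim\Omega}(T_\Omega)\,e(\nu_\Omega)$ of maximal degree $2\dim M$, whereas upon restriction to any different orbit $\Theta$ at least one factor in a direction normal to $\overline\Omega$ is lost, dropping the degree strictly below $2\dim M$.

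For uniqueness, let $C_1,C_2\in H^*_G(M;\Q)$ both satisfy (i')--(iii') and set $\delta=C_1-C_2$. Then $\delta|_\Omega=0$, each $\delta|_\Theta$ is divisible by $c(T_\Theta)$, and $\deg\delta|_\Theta<\deg\csm(\Theta,M)|_\Theta=2\dim M$. By the Atiyah-Bott hypothesis \eqref{Atiyahcondition}, $\delta$ is determined by its tuple of orbit-restrictions, so it suffices to prove $\delta|_\Theta=0$ for every $\Theta$. I would proceed by induction on the closure order. Writing $\delta|_\Theta=c(T_\Theta)\,\eta_\Theta$ in $H^*_G(\Theta)$, the strict degree bound forces $\deg\eta_\Theta<2\codim\Theta$, so $\eta_\Theta$ lies in a proper subspace of $H^*_G(\Theta)$ that cannot supply the $e(\nu_\Theta)$-top-degree piece; combining this with the inductive vanishing of $\delta|_{\Theta'}$ for $\Theta'\subsetneq\overline\Theta$ and the global compatibility coming from Atiyah-Bott (or, in the torus case, the GKM relations) then forces $\eta_\Theta=0$.

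The hardest step is precisely this closure of the induction, namely translating the local degree bound $\deg\eta_\Theta<2\codim\Theta$ together with inductive vanishing on lower orbits into $\eta_\Theta=0$. This is the interpolation-rigidity phenomenon underlying Okounkov's stable-envelope characterization: after reducing to a maximal torus $T\subset G$, the equivariant cohomology $H^*_T(M)$ embeds GKM-style into the sum over $T$-fixed components, and a tuple of classes with too-low degrees and the specified divisibility cannot extend to a globally compatible nonzero class. Carrying this out uniformly across all orbits, handling nontrivial stabilizers and exploiting \eqref{Atiyahcondition}, constitutes the main technical content of \cite{RV, FRcsm}.
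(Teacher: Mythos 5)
The paper does not prove Theorem~\ref{csminter} itself --- it cites \cite{RV,FRcsm} and only sketches the key mechanism for the K-theoretic analogue in Section~4.3 --- so your job was to reconstruct the argument. Your overall skeleton (verify (i')--(iii') from Aluffi's resolution formula, then argue uniqueness by interpolation) is reasonable, but the step that carries all the content, the smallness condition (iii'), is argued incorrectly. You claim that ``upon restriction to any different orbit $\Theta$ at least one factor in a direction normal to $\overline\Omega$ is lost, dropping the degree strictly below $2\dim M$.'' The restriction $H^*_G(M)\to H^*_G(\Theta)$ is a ring homomorphism; it does not ``lose a factor,'' and a priori $\deg(\csm(\Omega,M)|_\Theta)$ is bounded only by $2\dim M$. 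Proving the strict inequality is exactly the point. The actual mechanism, which the paper spells out for motivic Chern classes in Section~4.3 and which the cited references use in the CSM setting, is a limit argument: restrict to a suitable one-parameter subgroup $\C^*\hookrightarrow G_\Theta$ and show that $\lim_{\xi\to\infty}\csm(\Omega,M)|_\Theta/e(\nu_\Theta)$ exists and equals the Euler characteristic of $\Omega$ intersected with the associated Bia{\l}ynicki-Birula cell $M^-_\Theta$; the degree bound is a consequence of the finiteness of that limit. Your proposal contains no trace of this.

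The uniqueness step also does not close as written. The effective induction is by \emph{increasing} codimension: once $\delta|_{\Theta'}=0$ for all orbits $\Theta'$ of codimension $<k$, the class $\delta$ is supported on the closed union of orbits of codimension $\geq k$, and the Thom/Gysin isomorphism then forces $\delta|_\Theta$ to be divisible by $e(\nu_\Theta)$ for each $\Theta$ of codimension $k$. Combining $e(\nu_\Theta)$-divisibility (from support), $c(T_\Theta)$-divisibility (from (i')), and the strict degree bound $<2\dim M$ (from (iii')) is what kills $\delta|_\Theta$. You instead induct by closure order $\Theta'\subsetneq\overline\Theta$ --- the opposite direction for a support argument --- and never derive the $e(\nu_\Theta)$-divisibility that your phrase ``cannot supply the $e(\nu_\Theta)$-top-degree piece'' tacitly presupposes. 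As a smaller point, for (i') the base change $Y\times_M\Theta\to\Theta$ need not inherit simple normal crossing structure; the clean route, consistent with the paper's repeated reference to ``$\Omega\cap S_\Theta$, where $S_\Theta$ is a slice,'' is the local product decomposition of $M$ near the orbit $\Theta$, which produces the factor $c(T_\Theta)$ directly.
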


\begin{example}\rm \label{exabove2}
Continuing Example \ref{exabove}, let us calculate $\csm(\Omega^1)$. It is an inhomogeneous polynomial in $\Z[a_1,a_2,b_1,b_2,b_3]^{S_2\times S_3}$. The constraints put on this polynomial by the conditions of Theorem \ref{csminter} are 
$\phi_0(\csm(\Omega^1))=0$,
$\phi_1(\csm(\Omega^1))=(b_2-a_2)(b_3-a_2)(1+b_2-t_1)(1+b_3-t_1)(1+t_1-a_2)$, and 
$\deg(\phi_2(\csm(\Omega^1)))<6$. Here $\phi_1$, $\phi_2$ are as in Example \ref{exabove}, and $\phi_3$ is the ``restriction to $\Omega^2$'' map which turns out to be the identity map. Calculation shows that the unique solution to these constraints is (see notations in Example \ref{exabove})
\begin{multline*}
\csm(\Omega^1)=
(A_1-A_2+B_2-A_1B_1)+(-A_1^3+2A_1^2B_1-A_1B_1^2-A_1B_2+B_1B_1-B_3)+\\ \ldots+(-3A_1A_2^2+\ldots+2B_2B_3).
\end{multline*}
Details on such calculations, other examples of interpolation calculations of CSM classes, as well as more conceptual ways of presenting the long CSM polynomials are in \cite{FRcsm, Job}.

\end{example}

\section{Characteristic classes in equivariant K-theory}\label{sec:MC}

\subsection{The motivic Chern class}
We consider the topological equivariant K-theory constructed by Segal \cite{Segal} or Thomason's algebraic  K-theory, see \cite[\S5]{ChGi}. It is compatible with the K-theory built from locally free sheaves for algebraic varieties.
As before we have a decomposition
$$K_G(M)\simeq \bigoplus_{\Omega\in\Oc} K_G(\Omega)\,$$
and $K_G(\Omega)$ is isomorphic with the representation ring $R(G_\Omega)$.

The analogue of the Euler class of a vector bundle $E$ in K-theory is provided by the $\lambda$-operation
$$e^K(E)=\lambda_{-1}(E^*)=1-E^*+\Lambda^2E^*-\Lambda^3E^*+\dots.$$
It has the property that for any submanifold $\iota:N\hookrightarrow M$ and an element $\beta\in K_G(N)$ over $N$ we have $$\iota^*\iota_*(\beta)=e^K(\nu_N)\cdot \beta\,.$$
The total Chern class in K-theory is defined by
$$c_G^K(E)=\lambda_y(E^*)=1+yE^*+y^2\Lambda^2E^*+y^3\Lambda^3E^*+\dots\in K_G(M)[y]\,.$$
Note that, both in cohomology and K-theory, the total Chern class can be interpreted as the Euler class of the bundle $E$ which is equivariant with respect to $G\times \C^*$, where $\C^*$ acts on the base trivially, but acts on the bundle by scalar multiplication. Identifying
$ K_{G\times \C^*}(M)$ with $ K_G(M)[h,h^{-1}]$ and setting $y=-h^{-1}$ we obtain
$$c^K_G(E)=e^K_{G\times \C^*}(E)\,.$$
{We will drop the subscript $G$ in the notation, as we do for cohomological Chern classes.}

Our next \charr class defined for a locally closed subvariety $X$ in the smooth ambient space $M$ is the motivic Chern class (MC class) $\mC(X, M)\in K_G(M)[y]$, see \cite{BSY}. In fact this class is defined more generally for maps $X\to M$ but we will not need this generality here. The $G$-equivariant MC class
\[
\mC(X, M)\in K_G(M)[y]
\]
is a straightforward generalization, see details in \cite{FRW, AMSS2}.

A natural method to calculate the motivic Chern class of a locally closed subvariety $X\subset M$ is through the K-theoretic analogue
\begin{equation}\label{tauydef}\mC(X , M)=f_*\left(c^K(T_Y)-\sum_i c^K(T_{D_i})+\sum_{i<j} c^K(T_{D_i\cap D_j})-\dots \right)\end{equation}
of the formula \eqref{csmdef}.

\subsection{Axiomatic characterization of MC classes} \label{MCax}
In certain situations the resolution method (\ref{tauydef}) can be replaced with an axiomatic characterization, which we will now explain in several steps.

\begin{theorem}\cite[Cor 4.5, Lemma 5.1]{FRW}  \label{thm41}
Suppose $M$ has finitely many $G$-orbits.
Then the equivariant MC classes of the orbits satisfy the conditions:
\begin{enumerate}[(i'')]
\item (support condition) the class $\mC(\Omega, M)$ is supported on  $\overline{\Omega}$;
\item (normalization condition) $\mC(\Omega, M)_{|\Omega}=e^K(\nu_\Omega)c^K(T_\Omega)$;
\item (smallness condition) $\Ne(\mC(\Omega, M)_{|\Theta})\subseteq\Ne(\mC(\Theta , M)_{|\Theta})$ [see explanation below];
\item (the divisibility condition) for any $\Theta$ the restriction $\mC(\Omega, M)_{|\Theta}$ is divisible by $c^K(T_\Theta)$ in $K_G(\Theta)[y]$.
\end{enumerate}
\qed
\end{theorem}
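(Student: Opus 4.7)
The plan is to verify the four conditions independently, using the BSY definition \eqref{tauydef} applied to a normal-crossings resolution $f:Y\to M$ of $\overline{\Omega}\subset M$, together with standard functoriality (proper pushforward, base change, projection formula, self-intersection) in equivariant $K$-theory.

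For (i''), the support condition is essentially built into the formula: every summand in \eqref{tauydef} is a proper pushforward from a subvariety of $Y$ whose image in $M$ lies inside $\overline{\Omega}$, so $\mC(\Omega,M)$ lies in the image of $K_G(\overline{\Omega})[y]\to K_G(M)[y]$. For (ii''), I would restrict \eqref{tauydef} to an open neighborhood $U\subset M$ in which $\Omega$ is closed. Each boundary stratum $D_{i_1}\cap\cdots\cap D_{i_k}$ has image in $\overline{\Omega}\setminus\Omega$, so all terms with $k\geq 1$ vanish on $U$, leaving $\mC(\Omega,U)=\iota_*(c^K(T_\Omega))$ for the closed inclusion $\iota:\Omega\hookrightarrow U$. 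The $K$-theoretic self-intersection formula then yields
$$\mC(\Omega,M)|_\Omega=\iota^*\iota_*(c^K(T_\Omega))=e^K(\nu_\Omega)\cdot c^K(T_\Omega),$$
as required.

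For (iv''), restrict \eqref{tauydef} to a smooth orbit $\Theta$ by base change along $\Theta\hookrightarrow M$, producing a pushforward along $f^{-1}(\Theta)\to\Theta$ of pulled-back tangent contributions. Using the short exact sequence of tangent sheaves for the smooth inclusion $\Theta\subset M$ and the multiplicativity of $c^K$, each $c^K(T_{D_{i_1}\cap\cdots\cap D_{i_k}})$ restricted to $f^{-1}(\Theta)$ factors as $c^K(T_\Theta)$ pulled back, times a relative $c^K$. The projection formula then pulls the (pulled-back) factor $c^K(T_\Theta)$ out of the pushforward, giving the divisibility.

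The main obstacle is (iii''), the Newton polytope condition. My plan is first to localize: the Newton polytope is intrinsic to the torus subgroup, so I restrict to a maximal torus $T\subset G_\Theta$ and work with $K_T(\Theta)$ as a ring of Laurent polynomials, where the polytope $\Ne$ is the convex hull of the exponents appearing. The normalization (ii'') gives an explicit description of the target polytope $\Ne(\mC(\Theta,M)|_\Theta)$ as the Newton polytope of $e^K(\nu_\Theta)c^K(T_\Theta)$, determined by the $T$-weights of $T_{x_\Theta}M$ and the parameter $y$. For the left side, I would compute $\mC(\Omega,M)|_\Theta$ via base change applied to \eqref{tauydef} and bound its Newton polytope by summing the contributions of $T$-fixed loci in $f^{-1}(\Theta)$; the polytope of a proper pushforward is contained in the convex hull of the polytopes of the fixed-point summands by equivariant localization. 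The hard part is then a geometric-combinatorial comparison: showing that every fixed-point contribution from the resolution of $\overline{\Omega}$ above $\Theta$ lives inside the weight polytope coming from $T_{x_\Theta}M$ and $T_\Theta$. I expect this to reduce to verifying, stratum by stratum of the normal-crossings divisor, that the weights of tangent and normal directions in $Y$ over $\Theta$ are combinations of the weights of $T_{x_\Theta}M$ with nonnegative coefficients bounded by those controlling $\Ne(e^K(\nu_\Theta)c^K(T_\Theta))$—this is where the finite-orbit hypothesis, together with the structure of the resolution, should enter decisively.
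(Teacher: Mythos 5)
Your treatment of (i$''$), (ii$''$), and (iv$''$) is sound and matches the standard functoriality arguments: (i$''$) is read off from \eqref{tauydef} since all the $f_{I*}$ land in $K_G(\overline{\Omega})$; (ii$''$) follows from restricting to a neighborhood where $\Omega$ is closed and applying the self-intersection formula; and (iv$''$) can be obtained by base change along $\Theta\hookrightarrow M$ for a $G$-equivariant PNC (the $G$-equivariance makes each $D_I\cap f^{-1}(\Theta)\to\Theta$ a locally trivial fibration, so $f^*T_\Theta$ splits off $T_{D_I\cap f^{-1}(\Theta)}$ and $c^K(T_\Theta)$ factors out by the projection formula). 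One could equally obtain (iv$''$) by taking a slice $S_\Theta$ transversal to $\Theta$, which by Proposition~\ref{trans2allorbit} is motivically transversal to $\Omega$, and writing $\mC(\Omega,M)|_\Theta=c^K(T_\Theta)\cdot\mC(\Omega\cap S_\Theta,S_\Theta)$; this is closer to what \cite[Lemma 5.1]{FRW} actually does.

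The substantive gap is in (iii$''$). You reduce, correctly, to one-parameter subgroups and propose to bound $\Ne(\mC(\Omega,M)|_\Theta)$ by summing fixed-point contributions from $f^{-1}(\Theta)$ via equivariant localization. But the assertion that the Newton polytope of a proper pushforward is contained in the convex hull of polytopes of fixed-point summands is not immediate: each localization term has a denominator $e^K(\nu_F)$, and while the total sum is a Laurent polynomial, cancellations make the termwise ``polytope'' bound a nontrivial claim that itself needs an argument. You then defer the essential comparison to a ``geometric-combinatorial verification'' that you do not carry out. This is exactly where the paper supplies the missing idea: rather than analyzing the resolution stratum by stratum, one restricts to $\T_0=\C^*$ and identifies the limit
\[
\lim_{\xi\to\infty}\frac{\mC(\Omega,M)|_\Theta}{e^K(\nu_\Theta)}
\]
with the Hirzebruch $\chi_y$-genus $\chi_y(\Omega\cap M^-_\Theta)$ of the Bia{\l}ynicki-Birula minus-cell intersected with $\Omega$ (proved in \cite{WeBB} in general and in \cite[Th.\ 5.3]{FRW} in the cases needed here). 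The mere finiteness of this limit, applied over all choices of $\T_0$, yields the Newton polygon inclusion (iii$''$); the vanishing $\chi_y(\emptyset)=0$ under Property~\ref{positivity} upgrades it to the strict inclusion (vi$''$). Your outline never produces this identification, so as written the proof of (iii$''$) is incomplete: you have a plan that, if filled in, would amount to rediscovering the BB/$\chi_y$-genus limit, but that step is the heart of the theorem and cannot be waved through as ``expected to reduce to'' a combinatorial check.
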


In fact, the local condition {\em (iv'')} implies the
condition {\em (i'')}, and it also implies the obvious condition
{\em
\begin{enumerate}[(i'')]
\setcounter{enumi}{4}
\item $\mC(\Omega, M)_{|\Theta}=0$ if $\Theta \not\subset \overline{\Omega}$.
\end{enumerate}
}
\noindent {Note that the K-theoretic version of the condition \eqref{Atiyahcondition} follows from (iii').}

Let us give a precise formulation of the smallness condition {\em (iii'')}. The classes $\mC(\Omega, M)_{|\Theta}$ and $\mC(\Theta , M)_{|\Theta}$ belong to $K_G(\Theta)[y]=R(G_\Theta)[y]$.
Restrict these classes to the representation ring $R(\T_\Theta)[y]$, where $\T_\Theta$ is a maximal torus of $G_\Theta$. For a chosen isomorphism $\T_\Theta=(\C^*)^r$ we have $R(\T_\Theta)[y]=\Z[\alpha_1^{\pm 1},\ldots,\alpha_r^{\pm 1},y]$ where $\alpha_i$ are the K-theoretic Chern roots corresponding to the factors of $(\C^*)^r$. The Newton polygon of $\beta=\sum_I a_I(y)z^I\in R(\T_\Theta)[y]$ (multiindex notation) is 
\[
\Ne(\beta)=\text{convex hull }\{a_I(y)\in \R^r : a_I(y)\not= 0\},
\]
in particular, for this notion the parameter $y$ is considered a constant, not a variable. 
The definition of $\Ne$ depends on the chosen isomorphism between $R(\T_\Theta)[y]$ and $\Z[z_1^{\pm 1},\ldots,z_r^{\pm 1},y]$. Different such choices result in linearly equivalent convex polygons. The smallness condition compares two such Newton polygons, of course the same isomorphism need to be chosen for the two sides.

\begin{example}\label{1trivEX} \rm
If $M=\C$, $G=\C^*$ with the natural action on $\C$ we have a short exact sequence
$$\begin{matrix}0&\longrightarrow &K_{\C^*}(\{0\})&\longrightarrow&K_{\C^*}(\C)&\longrightarrow &K_{\C^*}(\C^*)&\longrightarrow &0\\
&&\parallel&^{(1-\xi^{-1})\cdot}&\parallel&^{ev_{\xi=1}}&\parallel\\
&&\Z[\xi,\xi^{-1}]&&\Z[\xi,\xi^{-1}]& &\Z\end{matrix}$$
In this example the restriction to $\{0\}$ is an isomorphism.
We have
$$\mC(\C^* , \C)=(1+y)\xi^{-1}\,,\qquad \mC(\{0\} , \C)=1-\xi^{-1}$$
and
$$\Ne(\mC(\C^* , \C))_{|0}=\{-1\}\,,\qquad \Ne(\mC(\{0\} , \C))_{|0}=[-1,0].$$
\end{example}

\begin{example}\label{2trivEX}\rm
Consider the standard action of $G={\rm SL}_2(\C)$ on $\C^2$. Then we have
$$\Ne(\mC(\C^2\setminus\{0\} ,\C^2))_{|0}=\Ne(\mC(\{0\} , \C^2))_{|0}=[-1,1]\,.$$
\end{example}

\smallskip

Examples \ref{1trivEX}, \ref{2trivEX} show that the inclusion of Newton polygons in {\em (iii'')} may or may not be strict. However for an interesting class of actions the inclusion is necessarily strict.

\begin{property} (cf. \cite[Definition 4.4]{FRW}) \label{positivity}We say that the action is positive, if for each orbit $\Omega$, $x\in\Omega$ there exists a one dimensional torus $\C^*\hookrightarrow G_x$ which acts on the normal space $(\nu_\Omega)_x$ with positive weights.
\end{property}

Note that Property \ref{positivity} implies that 0 is a vertex of the Newton polygon $\Ne(e^K(\nu_\Omega))$ because
$e^K(\nu_\Omega)_x=\prod_{i=1}^{\codim(\Omega)}(1-\chi_i^{-1})$, where $\chi_i$ are the weights of $\T_\Omega$ acting on $(\nu_\Omega)_x$. In particular $e^K(\nu_\Omega)\neq0$. Let ``+'' denote the Minkowski sum of polygons.

\begin{theorem}\cite[Theorem 5.3]{FRW} \label{tauyaxioms}
Suppose that the action of $G$ on $M$ has Property \ref{positivity}. Then the inclusion in (iii'') is strict. Moreover, for any pair of different orbits $\Theta, \Omega$ we have
\begin{enumerate}[(vi'')]
\item $\Ne(\mC(\Omega, M)_{|\Theta})\;\subseteq\; \Ne(e^K(\nu_\Theta)-1)\;+\;\Ne(c^K(T_\Theta))\;\subsetneq\; \Ne(\mC(\Theta , M)_{|\Theta}).$
\end{enumerate}
\end{theorem}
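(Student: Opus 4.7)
The plan is to prove the two inclusions of the displayed chain separately.

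For the right strict inclusion, I use the normalization (ii'') and positivity. By (ii''), the right-hand side equals $\Ne(e^K(\nu_\Theta)\,c^K(T_\Theta))$. Fix $x\in\Theta$ and let $\C^*_\Theta\hookrightarrow G_\Theta$ be the one-parameter subgroup from Property \ref{positivity}, acting with strictly positive weights $w_1,\ldots,w_c$ on $(\nu_\Theta)_x$. Expanding $e^K(\nu_\Theta)=\prod_i(1-\chi_i^{-1})$, the monomial indexed by $S\subseteq\{1,\ldots,c\}$ carries $\C^*_\Theta$-weight $-\sum_{i\in S}w_i$, which equals $0$ only for $S=\emptyset$ and is strictly negative otherwise. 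Hence $0$ is the isolated top vertex of $\Ne(e^K(\nu_\Theta))$ in the direction of the $\C^*_\Theta$-weight functional, so $\Ne(e^K(\nu_\Theta)-1)\subsetneq\Ne(e^K(\nu_\Theta))$; the same positivity prevents cancellation in the product, giving $\Ne(e^K(\nu_\Theta)\,c^K(T_\Theta))=\Ne(e^K(\nu_\Theta))+\Ne(c^K(T_\Theta))$. Combining, the right strict inclusion follows.

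For the left inclusion, I would argue via the resolution formula \eqref{tauydef}. Choose a $G$-equivariant resolution $f\colon Y\to\overline{\Omega}$ with normal-crossings exceptional divisor $D=\bigcup_i D_i$, and compute $\mC(\Omega,M)_{|x}$ for $x\in\Theta$ by K-theoretic Lefschetz localization over $f^{-1}(x)$. Since $\Theta\neq\Omega$ forces $x\in\overline{\Omega}\setminus\Omega$, the fiber $f^{-1}(x)$ lies entirely in $D$, so at each $\T_\Theta$-fixed point $p\in f^{-1}(x)$ the set $J(p)=\{i:p\in D_i\}$ is nonempty. A direct calculation shows that the alternating sum at $p$ telescopes:
$$\sum_{I}(-1)^{|I|}c^K(T_{D_I})\big|_p \;=\; c^K(T_{D_{J(p)}})\big|_p\cdot\prod_{i\in J(p)}\bigl(y\,t^{-\nu_i}\bigr),$$
where $t^{-\nu_i}$ denotes the character of $\T_\Theta$ dual to the conormal weight $\nu_i$ of $D_i$ at $p$. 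After dividing by the Lefschetz denominator and pushing forward by $f$, this extra factor $\prod_{i\in J(p)}(y\,t^{-\nu_i})$ shifts the Newton support of the contribution of $p$ out of the cap around $0$ and into $\Ne(e^K(\nu_\Theta)-1)+\Ne(c^K(T_\Theta))$. Summing over all fixed points $p\in f^{-1}(x)$ concludes the inclusion.

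The main obstacle is identifying, after pushforward by $f$, that the conormal characters $t^{-\nu_i}$ produced by the telescoping correspond (possibly after combining with the Lefschetz denominators) to characters on which the positivity of the $\C^*_\Theta$-action on $\nu_\Theta$ gives the needed sign bound. One has to carefully track the $\T_\Theta$-equivariant splitting $T_pY \to T_xM=T_x\Theta\oplus\nu_\Theta$, which is nontrivial because $f$ contracts the exceptional fiber directions, so $\nu_i$ is in general a mixed weight. This bookkeeping is the K-theoretic analogue of the classical cohomological fact (condition (iii') of Theorem \ref{csminter}) that CSM classes restrict with strictly smaller degree to a disjoint orbit, but the refinement from a degree inequality to a full Newton polytope containment is what makes the argument delicate. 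A complete treatment is given in \cite[Theorem 5.3]{FRW}; an alternative approach through the logarithmic de Rham interpretation of $\mC$ is also conceivable.
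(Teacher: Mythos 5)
Your argument for the right strict inclusion is correct and essentially the paper's: Property~\ref{positivity} forces every non-constant term of $e^K(\nu_\Theta)=\prod_i(1-\chi_i^{-1})$ to have strictly negative weight under the chosen one-parameter subgroup, so $0$ is an isolated vertex of $\Ne(e^K(\nu_\Theta))$ and the product with $c^K(T_\Theta)$ has no cancellation, giving $\Ne(e^K(\nu_\Theta)-1)+\Ne(c^K(T_\Theta))\subsetneq\Ne(e^K(\nu_\Theta)c^K(T_\Theta))=\Ne(\mC(\Theta,M)_{|\Theta})$. Your telescoping identity for the left inclusion is also a correct algebraic fact: at a $\T_\Theta$-fixed $p$ with $J(p)=\{i:p\in D_i\}$ one does get $\sum_I(-1)^{|I|}c^K(T_{D_I})|_p = c^K(T_{D_{J(p)}})|_p\cdot\prod_{i\in J(p)}yN_i^*$ where $N_i^*$ is the conormal character of $D_i$ at $p$.

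However, the step from this telescoping to the Newton-polytope containment is a genuine gap, not a bookkeeping chore, and it is precisely the step the paper avoids by taking a different route. After dividing by the localization denominator $e^K(T_pY)$ each summand is a rational function in the characters, not a Laurent polynomial; ``Newton support'' of a rational function is not a meaningful intermediate object here, and the summands need not individually land in $\Ne(e^K(\nu_\Theta)-1)+\Ne(c^K(T_\Theta))$ even though their sum does. Moreover, the conormal characters $N_i^*$ on the resolution $Y$ have no direct relation to the weights of $\nu_\Theta$ at $x$ — $f$ may contract entire positive-dimensional subvarieties to $x$, so the weights appearing in the denominators are not simply a subset of the weights of $T_xM$, and positivity of the $\C^*_\Theta$-action on $\nu_\Theta$ does not by itself control them. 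The paper (following~\cite{WeBB} and~\cite[Thm.~5.3]{FRW}) circumvents all of this: it reduces to $\Theta=\pt$, tests the containment by restricting to a one-parameter subgroup $\T_0=\C^*$ with coordinate $\xi$, and proves the clean limit formula
\[
\lim_{\xi\to\infty}\frac{\mC(\Omega,M)_{|\Theta}}{e^K(\nu_\Theta)}=\chi_y(\Omega\cap M^-_\Theta),
\]
so that existence of the limit gives (iii'') and positivity forces $\Omega\cap M^-_\Theta=\emptyset$, hence limit $0$ and strictness (vi''). This packages the entire sum-over-fixed-points into one finite limit and replaces the delicate weight-tracking on $Y$ by the geometry of the Bia\l{}ynicki-Birula cell downstairs on $M$. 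Your proposal, if pursued, would amount to re-deriving the limit theorem by hand via the resolution; as written, the pushforward/weight-matching step is identified as an obstacle but not overcome, so the proof is incomplete.
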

 A reformulation of condition {\em (vi'')} is that $\Ne(\mC(\Omega, M)_{|\Theta})$ is contained in $\Ne(\mC(\Theta , M)_{|\Theta})$ in such  a way that the origin is a vertex of $\Ne(\mC(\Theta , M)_{|\Theta})$, but it is not contained in $\Ne(\mC(\Omega, M)_{|\Theta})$.

Additionally assuming that the stabilizers $G_\Omega$ are connected guarantees that an element $\beta\in K_\T(M)$ is determined by the restrictions to orbits, and we obtain

\begin{theorem}\cite[Theorem 5.5]{FRW} \label{mainth}
If the action of $G$ on $M$ has finitely many orbits, it is positive, and the stabilizers are connected, then the conditions 
 (ii'), (iv''), (vi'') determine $\mC(\Omega, M)$ uniquely.
\end{theorem}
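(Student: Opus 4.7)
The plan is to show that if $\alpha,\alpha'\in K_G(M)[y]$ both satisfy (ii'), (iv''), (vi''), then $\beta=\alpha-\alpha'=0$. The argument relies on two ingredients: (a) injectivity of the total restriction map $K_G(M)[y]\to \bigoplus_\Theta R(\T_\Theta)[y]$, which follows from the connected-stabilizer hypothesis (giving $R(G_\Theta)\hookrightarrow R(\T_\Theta)$) together with the K-theoretic Atiyah nonvanishing $e^K(\nu_\Theta)\ne 0$ granted by positivity; and (b) a sharp Newton polygon constraint inherited from (vi'').

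Setup. From (ii') we have $\beta_{|\Omega}=0$; from (iv'') we have $c^K(T_\Theta)\mid \beta_{|\Theta}$ for every $\Theta$; and since $\Ne(f-g)\subseteq \mathrm{conv}(\Ne(f)\cup\Ne(g))$, from (vi'') we deduce
\[
\Ne(\beta_{|\Theta})\subseteq \Ne(e^K(\nu_\Theta)-1)+\Ne(c^K(T_\Theta)) \quad \text{for } \Theta\ne\Omega.
\]
As observed after Theorem \ref{thm41}, (iv'') already forces $\beta$ to be supported on $\overline{\Omega}$, so $\beta_{|\Theta}=0$ automatically for $\Theta\not\subset\overline{\Omega}$. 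By (a), it therefore suffices to prove $\beta_{|\Theta}=0$ for every $\Theta\subsetneq\overline{\Omega}$.

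I would carry this out by induction on orbits inside $\overline{\Omega}$, processed in order of decreasing dimension (any linear extension of the closure partial order, with $\Omega$ first, works). At the current orbit $\Theta$, divisibility lets one write $\beta_{|\Theta}=c^K(T_\Theta)\,\delta$; (vi'') then forces $\Ne(\delta)\subseteq\Ne(e^K(\nu_\Theta)-1)$. Positivity ensures every monomial of $e^K(\nu_\Theta)-1=\prod(1-\chi_i^{-1})-1$ has strictly negative pairing with the distinguished positive cocharacter $\C^*\hookrightarrow G_x$, so $\Ne(\delta)$ lies in an open half-space of the weight lattice omitting the origin. Combined with the induction hypothesis that $\beta$ already vanishes on all larger orbits — so, via the K-theoretic localization exact sequence for the closed inclusion of the union of unprocessed orbits into $\overline{\Omega}$, $\beta$ is supported on that closed union — this should force $\delta=0$.

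The main obstacle I anticipate is rigorously closing the induction: the half-space Newton polygon bound on $\delta$ is strong but purely local, so one must combine it carefully with the global K-theory support information from the preceding induction steps to rule out any nonzero ``ghost'' contribution. This is the K-theoretic analogue of the triangularity argument underlying Theorem \ref{Thom} for fundamental classes, with Newton polygons replacing degree bounds; positivity is the essential ingredient making the bounds strict enough to separate candidate classes.
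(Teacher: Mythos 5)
Your overall strategy---reduce to orbit restrictions via connected stabilizers, then induct over orbits by increasing codimension, feeding the localization exact sequence into the Newton polygon constraint from (vi'')---is the right one. But the step you yourself flag, ``this should force $\delta=0$,'' is a genuine gap, and the pieces you have assembled do not yet close it. Writing $\beta_{|\Theta}=c^K(T_\Theta)\,\delta$ from (iv'') and, from the exact sequence together with the self-intersection formula $\iota^*\iota_*(\gamma)=e^K(\nu_\Theta)\gamma$, also $\beta_{|\Theta}=e^K(\nu_\Theta)\,\gamma_\Theta$, your single bound $\max_\zeta\Ne(\delta)<0$ (where $\zeta$ is the positive cocharacter from Property~\ref{positivity}) gives only $\max_\zeta\Ne(\gamma_\Theta)<\max_\zeta\Ne(c^K(T_\Theta))$, which is no contradiction: you have used one divisibility and one end of the $\zeta$-interval, but you need both.

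The missing idea is this: in the UFD $R(\T_\Theta)[y]$ the factors $1+y\chi^{-1}$ of $c^K(T_\Theta)$ are coprime to the factors $1-\chi'^{-1}$ of $e^K(\nu_\Theta)$, so $c^K(T_\Theta)\,\delta=e^K(\nu_\Theta)\,\gamma_\Theta$ forces $\delta=e^K(\nu_\Theta)\,\rho$. If $\rho\neq0$, then the $\zeta$-extent (max minus min of $\langle\cdot,\zeta\rangle$) of $\Ne(\delta)=\Ne(e^K(\nu_\Theta))+\Ne(\rho)$ is at least that of $\Ne(e^K(\nu_\Theta))$, namely $\sum_i\langle\chi_i,\zeta\rangle$. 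On the other hand, Minkowski cancellation in $\Ne(c^K(T_\Theta))+\Ne(\delta)\subseteq\Ne(c^K(T_\Theta))+\Ne(e^K(\nu_\Theta)-1)$ gives $\Ne(\delta)\subseteq\Ne(e^K(\nu_\Theta)-1)$, whose $\zeta$-extent is $\sum_i\langle\chi_i,\zeta\rangle-\min_i\langle\chi_i,\zeta\rangle$---strictly smaller, by positivity. This contradiction forces $\rho=0$, hence $\beta_{|\Theta}=0$. A smaller point: your appeal to ``(iv'') implies (i'')'' to put $\beta$ on $\overline{\Omega}$ is not warranted (that remark concerns $\mC(\Omega,M)$ itself, not an arbitrary class satisfying the divisibility), but it is also unnecessary---run the induction over all orbits of $M$; the base case of open orbits $\Theta\neq\Omega$ is settled directly by (vi''), since there $e^K(\nu_\Theta)-1=0$, so $\Ne(\beta_{|\Theta})\subseteq\emptyset$ and $\beta_{|\Theta}=0$.
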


\begin{example} \rm
The Borel subgroup $B_n$ of $\GL_n(\C)$ acts on the full flag variety $\Fl(4)$ with finitely many orbits. The Newton polygon containment (ie. smallness condition {\em (iii'')}) for two of the orbits, $\Omega=\Omega_{(\{3\},\{4\},\{1\},\{2\})}$ and $\Theta=\Omega_{(\{3\},\{4\},\{2\},\{1\})}$, is illustrated below. For more details about the orbits of this action, their combinatorial codes, and their MC classes see  Section~\ref{sec-local}. The description of the Borel-equivariant K-theory of flag varieties can be found in \cite[\S6]{ChGi} or  in \cite{Uma}. The figure below illustrates (Newton polygons of) the two MC classes, restricted to $K_{B_n}(\Theta)[y]=K_{\T_\Theta}(\pt)[y]$. It turns out that $\T_\Theta=(\C^*)^4$ and hence the two Newton polygons live in $\R^4$. Moreover, both Newton polygons turn out to be contained in a $3$-dimensional subspace (sum of coordinates$=0$). Hence the picture shows 3-dimensional convex polytopes. 

\hfil\obrazek{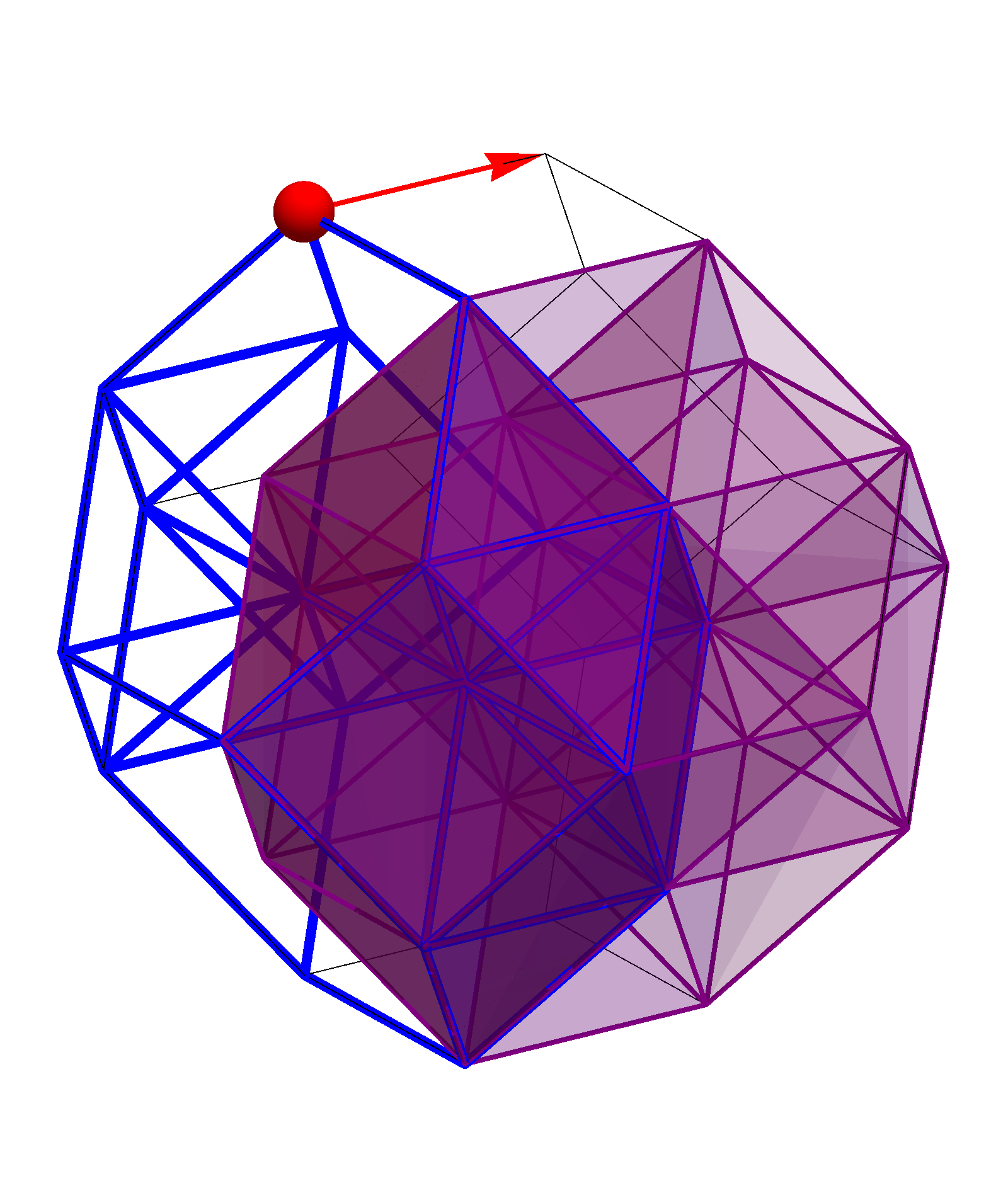}{10cm}

\noindent In blue --- Newton polygon of $e^K(\nu_\Theta)=\lambda_{-1}(\nu^*_\Theta)$.

\noindent Arrow in red --- cotangent weight $T^*_\Theta$.

\noindent In solid violet --- Newton polygon of $\mC(\Omega\cap S_\Theta)$, where $S_\Theta$ is a slice to $\Theta$.

\noindent Edges in violet --- Newton polygon of $\mC(\Omega)$.

\end{example}

\begin{remark} \rm Our smallness conditions {\em (iii'')} and {\em (vi'')} are motivated by the analogous smallness condition for {\em K-theoretic stable envelopes} invented by Okounkov, see \cite[(9.1.10)]{OkounkovK}. There is, however, a difference. In Okounkov's smallness condition the strictness of the inclusion of Newton polygons is guaranteed by the fact that the small Newton polygon can be shifted slightly within the large Newton polygon, see the first picture below. In fact, in the terminology of \cite{OkounkovK}, the inclusion holds for an open set (interior of an alcove) of ``fractional shifts''. However, such a ``wiggle room'' for the inclusions does not necessarily exist for MC classes, see the second and third pictures below. The second picture illustrates the smallness condition {\em (iii'')} (or {\em (vi'')}) for the standard representation $\GL_2(\C)$ on $\C^2$ with $\Omega=\C^2-\{0\}$, $\Theta=\{0\}$. The third picture illustrates smallness condition {\em (iii'')} (or {\em (vi'')}) for the natural $\GL_2(\C)\times \GL_2(\C)$-representation on $\Hom(\C^2,\C^2)$ (a.k.a. $A_2$ quiver representation with dimension vector $(2,2)$) with $\Omega=\{\text{rank 1 maps}\}$ and, $\Theta=\{0\}$.
\begin{center}\obrazek{mobingnewton}{4cm}\hskip45pt\obrazek{gl2newton}{3.5cm}\hskip40pt\obrazek{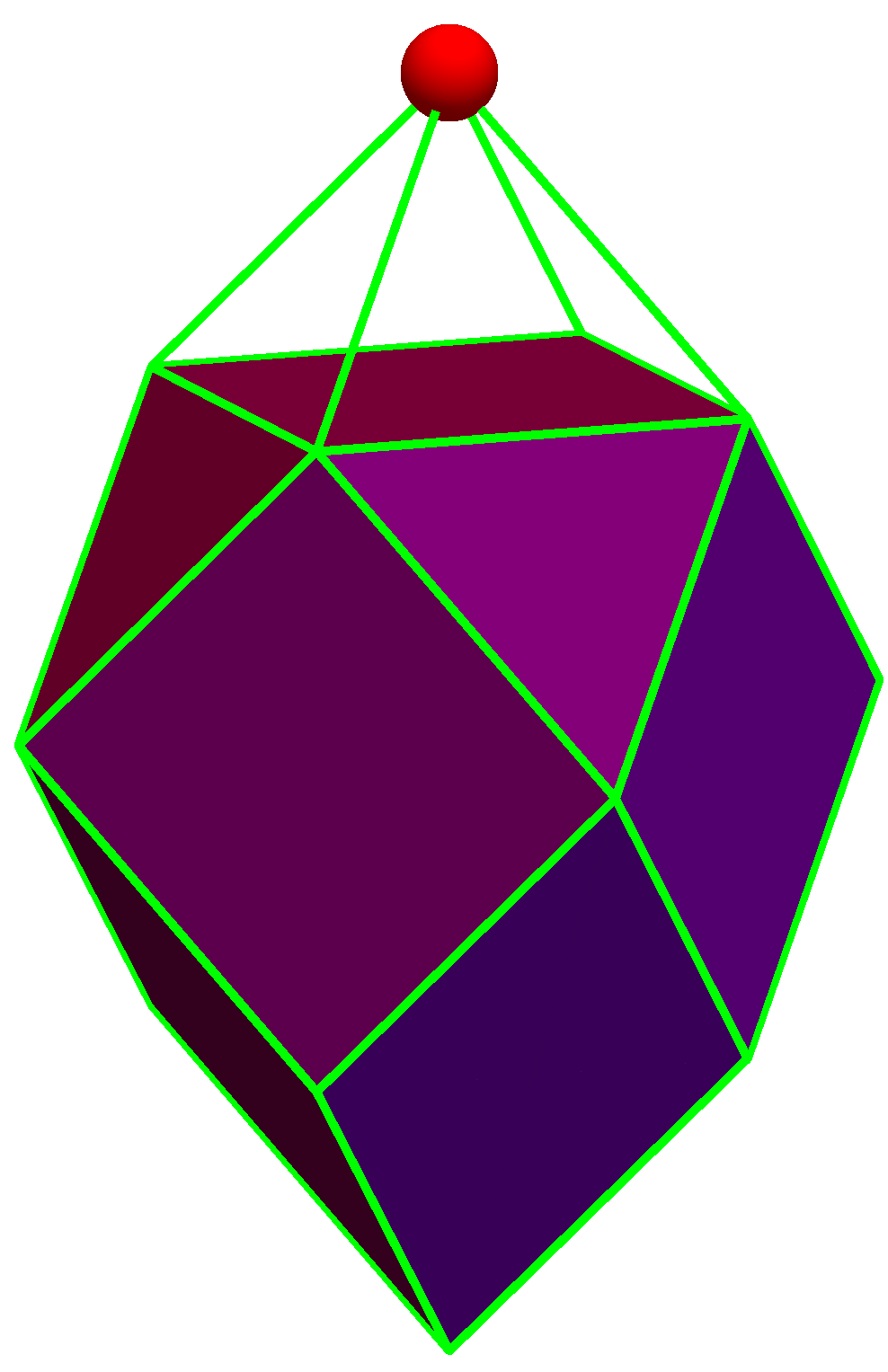}{6cm}\end{center}
\end{remark}

\subsection{The key idea of the proof of Theorem \ref{thm41}}

At the heart of the arguments proving the statements of Section~\ref{MCax} is showing that conditions {\em (iii'')} and {\em (vi'')} hold.
The crucial step in the proof is the special case when $\Theta$ is a point. To test an inclusion of Newton polygons it is enough to restrict an action to each one dimensional torus $\T_0=\C^*\hookrightarrow \T$, with $K_{\T_0}(pt)=\Z[\xi,\xi^{-1}]$. The argument reduces to examining the limit
\begin{equation}\label{limit}\lim_{\xi\to \infty}\frac{\mC(\Omega, M)_\Theta}{e^K(\nu_\Theta)}\,.\end{equation}
We prove that the limit is equal to
$$\chi_y(\Omega\cap M^-_\Theta)\,,$$
where $M^-_\Theta$ is the Bia{\l}ynicki-Birula minus-cell associated to $\Theta$, (see \cite{BB} or \cite[\S4.1]{Ca})
\begin{equation}\label{BB-cell}M^-_\Theta=\{x\in M\;|\;\lim_{t\to\infty} tx\in \Theta\}\,,\end{equation}
and $\chi_y$ is the Hirzebruch $\chi_y$-genus. The proof of this statement in full generality is given in \cite{WeBB} and in the particular cases needed in the proof of
Theorem \ref{mainth} in \cite[Th. 5.3]{FRW}.

Since the limit \eqref{limit} is finite the degree of the denominator is at least the degree of the numerator. This observation leads to the proof of {\em (iii'')}. Moreover, if the action is positive then the limit is equal to $\chi_y(\emptyset)=0$, thus the degree of the denominator must be strictly larger than the degree of the numerator. This observation leads to the proof of {\em (vi'')}.

\subsection{Strict inclusion for homogeneous singularities}\label{sec-alt}

In this section we give a rigorous alternative argument proving the containment property {\em (iii'')} of Newton polygons of $\mC$ classes. We believe that it sheds more light on this intriguing connection between algebraic and convex geometry.

\smallskip

Suppose $\T=\C^*$ acts on a vector space $M=\C^n$ via scalar multiplication. Let $\Omega\subset \C^n$ be an invariant subvariety, where $0\not\in\Omega$. Denote by $Z\subset \P^n$ the projectivization of $\Omega$. We have a diagram
\[
\xymatrix{{\Omega'}\ar[d]_{\simeq}\ar@{^{(}->}[r]& \tilde \C^n \ar[d]^{p} \ar[r]^-{q}
&\P^{n-1}\ar@/^2pc/[l]_{\iota}& Z \ar@{_{(}->}[l]\\
{\Omega}\ar@{^{(}->}[r]& \C^n
}
\]
where $\tilde {\C^n}$ is the blow-up of $\C^n$ at 0 and $\iota:\P^{n-1}\to \tilde{\C^n}$ is the inclusion of the special fiber, ${\Omega'}=q^{-1}(Z)\setminus \iota(Z)$. Let $h=[{\mathcal O}(1)]\in K(\P^{n-1})$. Then
$$\frac{\mC( \Omega', \tilde{\C^n})_{|\P^{n-1}}}{\lambda_{-1}(\nu^*_{\P^{n-1}/\tilde{\C^n}})}=\left( \frac{1+y \xi^{-1} h}{1-\xi^{-1} h}-1\right)\,\mC(Z, \P^{n-1})=\frac{(1+y) \xi^{-1} h}{1-\xi^{-1} h}\;\mC(Z, \P^{n-1})\,.$$
Applying the localization formula for the map $p$ we obtain \begin{equation}\label{eq:blowdown}\frac{\mC(\Omega, \C^n)}{e^K(\C^n)}= p_*\left(\frac{(1+y) \xi^{-1} h}{1-\xi^{-1} h}\;\mC(Z, \P^{n-1})\right)\,.\end{equation}
Let $u=h^{-1}-1\in K(\P^{n-1})$. Note that $u^n=0$.
We have the expansion
\begin{multline*}
\frac{(1+y)\xi^{-1} h}{1-\xi^{-1} h}=\frac{(1+y)\xi^{-1} }{1+u - \xi^{-1} }
=\frac{(1+y)\xi^{-1} }{(1 - \xi^{-1})(1+\tfrac u{1 - \xi^{-1}}) }\\
=\frac{(1+y)\xi^{-1} }{1 - \xi^{-1} }\left( 1-\frac u{1- \xi^{-1} }+\dots+(-1)^{n-1}\frac {u^{n-1}}{(1-\xi^{-1})^{n-1} }\right)\,.\end{multline*}
The expression under the push-forward $p_*$ in (\ref{eq:blowdown}) is of the form
$$\frac1{(1- \xi^{-1})^n}P(u,\xi^{-1},y)\,.$$
The polynomial $P$ is of degree at most $n$ in $\xi^{-1}$, at most $n-1$ in $u$ and it is divisible by $(y+1)\xi^{-1}$. We have\footnote{Since $u^k=[\mathcal O_{\P^{n-1-k}}]\in K(\P^{n-1})$  thus $p_*(u^k)=\chi(\P^{n-1-k};\mathcal O)=1$.}  $p_*(u^k)=1$ for $k<n$.
After the push-forward we obtain
$$\frac{\mC(\Omega, \C^n)}{e^K(\C^n)}=\frac{Q(\xi^{-1},y)}{(1-\xi^{-1})^n}\,,$$
with the polynomial $Q(\xi^{-1},y)=P(1,\xi^{-1},y)$ again divisible by $(y+1)\xi^{-1}$ and of degree at most $n$ in $\xi^{-1}$.

\begin{remark}\rm The argument above shows that $\mC(\Omega, \C^n)$ is divisible by
$(y+1)$. In a similar way this divisibility  can be proven for any quasihomogenous subvariety. Such variety can be presented as a quotient (by a finite group) of homogeneous one.
Application of the Lefschetz-Riemann-Roch formula \cite[Th. 5.1]{CMSS} gives an explicit formula for $\mC(\Omega,\C^n)$.\end{remark}

\subsection{An example: quadratic cone---limits with different choices of 1-parameter subgroup}

This example is based on the computations made in \cite{MiWe}.
Let $\Omega\subset \C^4$ be the open set given by the inequality $z_1z_2-z_3z_4\neq0$.
This is an open orbit of the natural action of $\C^*\times O(4,\C)$ on $\C^4$.

On the other hand, identifying $\C^4$ with $2\times 2$ matrices $\begin{pmatrix}z_1&z_3\\z_4&z_2\end{pmatrix}$ the set $\Omega$ is equal to the set of nondegenerate matrices. Its motivic Chern class was computed in \cite[\S8]{FRW}.

Denote the basis characters of the torus $\T=(\C^*)^3$ by $\alpha,\; \beta,\;\gamma$. Suppose $\T$
acts on $\C^4$  with characters
$$\alpha \beta\; ,\;\alpha /\beta\; ,\;\alpha \gamma\; ,\;\alpha/\gamma\,. $$
 The action preserves the variety $\Omega$. The action has a unique fixed point at  $0$. We study $\Omega\cap(\C^4)_{\{0\}}^-$, the Bia{\l}ynicki-Birula minus-cell (defined by \eqref{BB-cell}) intersected with the orbit,  depending on the choice of the one parameter subgroup.
We apply the formula \cite[Formula 3]{MiWe} for $n=4$, which allows us to compute the motivic Chern class of $\Omega$:
$$\mC(\Omega, \C^4)=(1+y)^2 \left(
\frac{1}{\alpha ^4}y^2+
\left(
 \frac{\beta }{\alpha^3}
+\frac{\gamma }{\alpha^3}
+\frac{1}{\alpha ^3 \beta }
+\frac{1}{\alpha ^3 \gamma }
-\frac{1}{\alpha^2}
-\frac{1}{\alpha ^4}
\right)y
+\frac{1}{\alpha ^2}\right).$$
Below we present a sample of choices of one parameter subgroups and computed limits of motivic Chern classes:
{\renewcommand{\arraystretch}{2}
$$
\begin{array}{|c||c|c|c|c|}\hline
 (\alpha,\beta,\gamma)& \text{minus--cell }(\C^4)_{\{0\}}^-& \text{inequality}&\Omega\cap (\C^4)_{\{0\}}^-&\text{limit of }\frac{\mC(\Omega\cap (\C^4)_{\{0\}}^-)_0}{e^K(\nu_0)}\\
\hline \hline
(\xi, 1,1) & \{0\}
& 0\neq 0 &\emptyset& 0 \\
\hline
(\xi^{-1},1,1) & \C^4 & z_1z_2-z_3 z_4\neq 0 & \begin{matrix}\text{cone complement}\\
[\C^4]-[\C^*][\P^1]^2-[0]\end{matrix}&y^4+(y+1)(1-y)^2-1\\

\hline
 (\xi^{-1},\xi ^2,1) &z_1=0 & z_3z_4\neq 0 & \C\times(\C^*)^2&-y (y+1)^2 \\
\hline
\end{array}
$$
}

\section{Flag manifolds and quiver representation spaces}\label{sec:Fl}

After fixing some combinatorial codes we will define two related geometric objects: a flag variety and a quiver representation space, together with the description of the Borel orbits in these spaces. Then we will present formulas for the motivic Chern classes of the orbits.

\subsection{Combinatorial codes}
Let $n,N$ be non-negative integers, and let $\mu=(\mu_1,\ldots,\mu_N)\in \N^N$ with $\sum_{i=1}^N \mu_i=n$. Denote $\mu^{(i)}=\sum_{j=1}^i \mu_j$.

Let $\I_\mu$ denote the collection of $N$-tuples $(I_1,\ldots,I_N)$ with $I_j\subset \{1,\ldots,n\}$, $I_i\cap I_j=\emptyset$ unless $i=j$, $|I_i|=\mu_i$. For $I\in \I_\mu$ we will use the following notation: $I^{(j)}=\cup_{i=1}^j I_i=\{i^{(j)}_1<\ldots < i^{(j)}_{\mu^{(j)}}\}$.

For $I\in \I_\mu$ define $\ell(I)=|\{(a,b)\in \{1,\ldots,n\}^2:a>b, a\in I_j, b\in I_k, j<k\}|$.

\subsection{Flag variety} \label{sec:flag}
Let $\Fl_\mu$ denote the partial flag variety parameterizing chains of subspaces
$
V_{\bullet}=(0\subset V_1 \subset V_2 \subset \ldots \subset V_{N-1}\subset V_N=\C^n)
$
with $\dim(V_i)=\mu^{(i)}$. Consider the natural action of the Borel subgroup $B_n\subset \GL_n(\C)$ on $\Fl_\mu$. Let $\F_i$ be the tautological bundle over $\Fl_\mu$ whose fiber over the point $(V_\bullet)$ is $V_i$. Let the $B_n$-equivariant K-theoretic Chern roots of $\F_i$ be $\alpha^{(i)}_j$, for $i=1,\ldots,N$, $j=1,\ldots,\mu^{(i)}$, that is, in $K_{B_n}(\Fl_\mu)$ we have $\sum_{j=1}^{\mu^{(i)}} \alpha^{(i)}_j=\F_i$. Then we have
\begin{equation} \label{KFl}
K_{B_n}(\Fl_\mu)[y]=\Z[(\alpha^{(i)}_j)^{\pm 1},y]^{S_{\mu^{(1)}}\times \ldots \times S_{\mu^{(N-1)}}}/(\text{certain ideal}).
\end{equation}
Here the symmetric group $S_{\mu^{(k)}}$ permutes the variables $\alpha^{(k)}_1,\ldots,\alpha^{(k)}_{\mu^{(k)}}$.
\begin{definition}
For $I\in \I_\mu$ define the Schubert cell
\[
\Omega_I=\big\{(V_i)\in \Fl_\mu : \dim(V_p \cap \C^q_{last})=\#\{i\in I^{(p)} : i>n-q\}, \forall p,q\big\} \subset \Fl_\mu,
\]
where $\C^q_{last}$ is the span of the last $q$ standard basis vectors in $\C^n$. Its codimension in $\Fl_\mu$ is $\ell(I)$.
\end{definition}

\subsection{Quiver representation space.}
Consider $\RRep_\mu=\oplus_{j=1}^{N-1} \Hom(\C^{\mu^{(j)}}, \C^{\mu^{(j+1)}})$ with the action of
\[
\GL\times B_n=\prod_{i=1}^{N-1} \GL_{\mu^{(i)}}(\C) \times B_n
\]
given by
\[
(g_i,g_N)_{i=1,\ldots,N-1} \cdot (a_j)_{j=1,\ldots,N-1} = ( g_{j+1} \circ a_j \circ g_j^{-1} )_{j=1,\ldots,N-1}.
\]
For $i=1,\ldots,N$ let $\F_i$ be the tautological rank $\mu^{(i)}$ bundle over the classifying space of the $i$'th component of $\GL\times B_n$. Let $\alpha^{(i)}_j$ be the K-theoretic Chern roots of $\F_i$, for $i=1,\ldots,N$, $j=1,\ldots, \mu^{(i)}$. Then we have
\begin{equation}\label{KRep}
K_{\GL\times B_n}(\RRep_\mu)[y]=\Z[(\alpha^{(i)}_j)^{\pm 1},y]^{S_{\mu^{(1)}}\times \ldots \times S_{\mu^{(N-1)}}}.
\end{equation}

\medskip

Notice that the K-theory algebra in (\ref{KFl}) is a quotient of the K-theory algebra in (\ref{KRep}).

\begin{definition}
For $I\in \I_\mu$ define the matrix Schubert cell $M\Omega_I \subset \RRep_\mu$ by
\begin{multline*}
M\Omega_I=\{(a_i)\in \RRep_\mu : a_i \text{ is injective }\forall i,
\\
\dim( (a_{N-1}\circ \ldots \circ a_p)(\C^{\mu^{(p)}})  \cap \C^q_{last})=\#\{i\in I^{(p)} : i>n-q\}, \forall p,q\},
\end{multline*}
where $\C^q_{last}$ is the span of the last $q$ standard basis vectors in $\C^n$. Its codimension in $\RRep_\mu$ is  $\ell(I)$.
\end{definition}

\begin{remark} \label{rem:GIT}
Of course we have that $\Fl_\mu=\RRep_\mu \sslash\GL$ as a GIT quotient space. Namely, let $\RRep_\mu^{ss}$ be the subset consisting only injective maps. Then the natural map
\[
\RRep_\mu^{ss} \to \Fl_\mu, \qquad (a_i) \mapsto ( (a_{N-1}\circ \ldots \circ a_p)(\C^{\mu^{(p)}}) )_p
\]
is a topological quotient by $\GL$, in fact $\RRep_\mu^{ss}\to \Fl_\mu$ is a principal bundle, cf. \cite{Zie}. Under this map, we have $M\Omega_I$ maps to $\Omega_I$.
This correspondence between $\Fl_\mu$ and $\RRep_\mu$ will be used in Section \ref{sec-laci}.
\end{remark}

\subsection{Weight functions} \label{sec:weight}
In this section we define some explicit functions that will be used in naming the equivariant motivic Chern classes of Schubert and matrix Schubert cells. For more details on these functions see e.g. \cite{RTV, RTVselecta} or references therein.

 For $I\in \I_\mu$, $j=1,\ldots,N-1$, $a=1,\ldots,\mu^{(j)}$, $b=1,\ldots,\mu^{(j+1)}$ define
\[
\psi_{I,j,a,b}(\xi)=\begin{cases}
1-\xi & \text{if } i^{(j+1)}_b<i^{(j)}_a \\
 (1+y)\xi & \text{if }  i^{(j+1)}_b=i^{(j)}_a \\
1+y\xi & \text{if }  i^{(j+1)}_b>i^{(j)}_a.
\end{cases}
\]
Define the {\em weight function}
\[
W_I=
\Sym_{S_{\mu^{(1)}} \times \ldots \times S_{\mu^{(N-1)}}} U_I
\]
where
\[
U_I=
\prod_{j=1}^{N-1} \prod_{a=1}^{\mu^{(j)}} \prod_{b=1}^{\mu^{(j+1)}}
\psi_{I,j,a,b}(\alpha^{(j)}_a/\alpha^{(j+1)}_b )
\cdot
\prod_{j=1}^{N-1} \prod_{1\leq a <  b\leq \mu^{(j)}} \frac{ 1+y\alpha^{(j)}_b/\alpha^{(j)}_a}{1-\alpha^{(j)}_b/\alpha^{(j)}_a}.
\]
Here the symmetrizing operator is defined by
\[
\Sym_{S_{\mu^{(1)}}  \times \ldots \times S_{\mu^{(N-1)}}} U_I=
\sum_{\sigma\in S_{\mu^{(1)}} \times \ldots \times S_{\mu^{(N-1)}}} U_I(\sigma(\alpha^{(j)}_a))
\]
where the $j$th component of $\sigma$ (an element of $S_{\mu^{(j)}}$) permutes the $\alpha^{(j)}$ variables.
For
\[
c_\mu=
\prod_{j=1}^{N-1}
\prod_{a=1}^{\mu^{(j)}}
\prod_{b=1}^{\mu^{(j)}}
(1+y\alpha^{(j)}_b/\alpha^{(j)}_a),
\qquad
c'_\mu=
\prod_{j=1}^{N-1}
\prod_{a=1}^{\mu^{(j+1)}}
\prod_{b=1}^{\mu^{(j)}}
(1+y\alpha^{(j)}_b/\alpha^{(j+1)}_a)
\]
define the {\em modified weight functions}
\[
\tilde{W}_I=W_I/c_\mu, \qquad \hat{W}_I=W_I/c'_\mu.
\]

Observe that $\tilde{W}_I, \hat{W}_I$ are not Laurent polynomials, but rather ratios of two such.

Weight functions were defined by Tarasov and Varchenko in relation with hypergeometric solutions to qKZ differential equations, see e.g. \cite{TV}.

\subsection{Motivic Chern classes of Schubert cells given by weight functions}

\begin{theorem}\label{MCmain}
For the motivic Chern classes of matrix Schubert and ordinary Schubert cells we have
\begin{align}
\label{egy} \mC(M\Omega_I, \RRep_\mu)=W_I \qquad & \in K_{\GL\times B_n}(\RRep_\mu)[y], \\
\label{ketto} \mC(\Omega_I, \Fl_\mu)=[\tilde{W}_I] \qquad & \in K_{ B_n}(\Fl_\mu)[y].
\end{align}
\end{theorem}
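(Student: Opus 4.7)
The plan is to invoke the axiomatic characterization in Theorem \ref{mainth}, reducing the statement to the verification that $W_I$ satisfies the axioms (ii'), (iv''), (vi'') for the $\GL\times B_n$-action on $\RRep_\mu$, and that $[\tilde W_I]$ satisfies them for the $B_n$-action on $\Fl_\mu$. Before doing so I would check the three hypotheses of Theorem \ref{mainth}: the orbits are finite (a classical Schubert-type enumeration indexed by $\I_\mu$), the stabilizers of points are connected (each is a semidirect product of a torus with a unipotent subgroup), and the actions are positive in the sense of Property \ref{positivity}—this is arranged by choosing a suitable regular cocharacter of the maximal torus so that all normal weights at each orbit are positive.

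I would treat the matrix Schubert case (\ref{egy}) first and then deduce the flag case (\ref{ketto}) from it. The torus-fixed points of $\RRep_\mu$ are naturally indexed by $\I_\mu$: the fixed point $x_J$ is the tuple of coordinate embeddings dictated by the subsets $J^{(j)}$. The verification reduces to computing the restriction $W_I|_{x_J}$ obtained by substituting $\alpha^{(j)}_a \mapsto t_{i^{(j)}_a}$, where $t_1,\ldots,t_n$ are the characters of the maximal torus of $B_n$. The crucial mechanism is that in the symmetrization $W_I=\Sym\,U_I$ most permutations contribute zero after restriction because some factor $\psi=1-\xi$ specializes to $1-1=0$; only a small collection of $\sigma$'s survive. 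At $x_I$ itself a single summand survives, and a direct cancellation against the denominator-like factors $\prod(1-\alpha^{(j)}_b/\alpha^{(j)}_a)$ identifies it with $e^K(\nu_{M\Omega_I})\cdot c^K(T_{M\Omega_I})$, giving (ii'). Divisibility (iv'') then follows by tracking the tangent-type factors of the form $1+y\alpha^{(j)}_b/\alpha^{(j)}_a$ and $1+y\xi$: these persist in every surviving summand of $W_I|_{x_J}$ and precisely match a factor of $c^K(T_{M\Omega_J})$.

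The main obstacle is verifying the strict Newton polygon containment (vi''), that is
$$\Ne(W_I|_{x_J})\;\subseteq\;\Ne(e^K(\nu_J)-1)+\Ne(c^K(T_J))\;\subsetneq\;\Ne(W_J|_{x_J})$$
for $J\neq I$. This is a combinatorial assertion: for each $\sigma$ in the product of symmetric groups one must track the support in the character lattice of the restricted summand $(\sigma\cdot U_I)|_{x_J}$, and show that the union of these supports, and hence their convex hull, lies in the prescribed polytope and avoids the origin. I would proceed factor-by-factor, using that each $\psi_{I,j,a,b}$ contributes a one-dimensional slab in a direction controlled by the positivity cocharacter, and then using the mismatch between $I$ and $J$ to produce at least one factor that pushes strictly into the positive halfspace, exactly in the spirit of the limit-of-equivariant-classes argument sketched after Theorem \ref{tauyaxioms}. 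I expect this combinatorial bookkeeping to be the most delicate part; it can alternatively be replaced by the geometric route of computing the limit (\ref{limit}) as a $\chi_y$-genus of $M\Omega_I \cap M_{x_J}^-$ and observing that this intersection is empty unless $x_J \in \overline{M\Omega_I}$, forcing the strict containment.

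Finally, (\ref{ketto}) will follow by descent along the principal $\GL$-bundle $\RRep_\mu^{ss}\to \Fl_\mu$ of Remark \ref{rem:GIT}. The factor $c_\mu$ is the K-theoretic Euler class of the $\GL$-orbits in $\RRep_\mu^{ss}$, so $\tilde W_I = W_I/c_\mu$ represents the class obtained by pushing forward $W_I$ along the GIT quotient; the axioms (ii'), (iv''), (vi'') for $[\tilde W_I]$ on $\Fl_\mu$ are consequences of the corresponding axioms for $W_I$ on $\RRep_\mu^{ss}$ together with the fact that $M\Omega_I$ is the $\GL$-saturation of the preimage of $\Omega_I$. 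Invoking Theorem \ref{mainth} once more, applied to the $B_n$-action on $\Fl_\mu$, then identifies $[\tilde W_I]$ with $\mC(\Omega_I,\Fl_\mu)$.
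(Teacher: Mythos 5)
Your high-level strategy matches the one the paper attributes to \cite{FRW} and \cite{RTVselecta} (strategies (b) and (e) in the paper's own list): verify the interpolation axioms of Theorem \ref{mainth} for $W_I$ on $\RRep_\mu$, then pass to $\Fl_\mu$. The hypothesis checks (finitely many orbits, connected stabilizers, positivity) and your outline of how the normalization and divisibility axioms follow from cancellation of $\psi$-factors in the symmetrization of $U_I$ are plausible and consistent with those sources.

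The gap is in your handling of the smallness axiom (vi''). You rightly flag the Newton polygon containment as the delicate step---the paper stresses it is a ``complicated algebraic argument\ldots unknown before even by the experts of weight functions,'' occupying \cite[Section 3.5]{RTVselecta}---but the alternative you offer is circular. The limit (\ref{limit}), evaluated as $\chi_y(M\Omega_I\cap M^-_{x_J})$, is an assertion about the actual motivic Chern class $\mC(M\Omega_I,\RRep_\mu)$: it is how Theorem \ref{tauyaxioms} shows \emph{that class} satisfies (vi''). To apply the uniqueness clause of Theorem \ref{mainth} you must establish (vi'') \emph{for the function $W_I$} independently, and appealing to the geometric limit to do so already presupposes $W_I=\mC$, which is what you are trying to prove. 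The only non-circular geometric route is to abandon interpolation and compute $\mC$ directly from a resolution with normal crossing boundary (the paper's strategy (a)), observing that the resulting pushforward formula is literally $W_I$---but that is a different proof, not a substitute for your combinatorial verification inside the interpolation argument.

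A smaller remark: for (\ref{ketto}) the paper does not re-verify the interpolation axioms on $\Fl_\mu$ as you propose. Section \ref{sec-laci} proves Theorem \ref{te:eq-git}, which transports the motivic \emph{Segre} class across the GIT quotient by the equivariant Kirwan map; combined with Remark \ref{segre} this gives $\mS(\Omega_I,\Fl_\mu)=[\hat{W}_I]$ directly from $\mS(M\Omega_I,\RRep_\mu)=\hat{W}_I$, with no need to relate Newton polygon conditions across the quotient. Your plan of deducing the axioms downstairs from those upstairs is not obviously sound, since the stabilizers and normal $\T_\Theta$-representations on $\Fl_\mu$ are not the same as those of the $\GL$-saturated orbits in $\RRep_\mu$, so the two Newton polygon conditions live in different lattices and would need a genuine comparison argument.
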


First let us comment on the two statements of the theorem. As we saw in (\ref{KRep}) the ring $K_{\GL\times B_n}(\RRep_\mu)[y]$ is a Laurent polynomial ring, and claim (\ref{egy}) states which Laurent polynomial is the sought MC class. However, the ring $K_{\GL\times B_n}(\Fl_\mu)[y]$ is a quotient ring of that Laurent polynomial ring, cf. (\ref{KFl}), hence (\ref{ketto}) only names {\em a representative} of the sought MC class. Moreover, the function $\tilde{W}_I$ is a rational function, so the equality (\ref{ketto}) is meant in the following sense: the restriction to  each torus fixed of the two sides of (\ref{ketto}) are the same---in particular the fix point restrictions of $\tilde{W}_I$ are Laurent polynomials.

\begin{remark}\label{segre}
If we define the {\em motivic Segre class} of $X \subset M$ by
\[
\mS(X , M)=\mC(X , M)/c^K(TM),
\]
then we can rephrase Theorem \ref{MCmain} in the more symmetric form
\[
\mS(M\Omega_I, \RRep_\mu)=\hat{W}_I, \qquad
\mS(\Omega_I, \Fl_\mu)=[\hat{W}_I].
\]
This is due to the fact that $c^K_{\GL\times B_n}(T\RRep_\mu)=c'_\mu$, and $[c_{B_
n}^K(T\Fl_\mu)]=c'_\mu/c_\mu$. That is, $c_\mu$ is the K-theoretic total Chern class of the fibers of the GIT quotient map of Remark \ref{rem:GIT}.
\end{remark}

\bigskip

Now let us review different strategies that can be used to prove Theorem \ref{MCmain}---some of them already present in the literature.
One can
\begin{enumerate}[(a)]
\item prove (\ref{egy}) by resolution of singularities;
\item prove (\ref{egy}) by the interpolation Theorem \ref{mainth};
\item prove (\ref{ketto}) by resolution of singularities;
\item prove (\ref{ketto}) by the interpolation Theorem \ref{mainth};
\item prove that (\ref{egy}) implies (\ref{ketto}) via Remark \ref{segre}.
\end{enumerate}

Carrying out (b) and/or (d) has the advantage of not having to construct a resolution with normal crossing divisors. Carrying out (a) and/or (c) has the advantage of not having to prove the sophisticated Newton polygon properties of the weight functions.

Historically, first the Newton polygon properties of weight functions were proved (in the context of K-theoretic stable envelopes), see \cite[Section 3.5]{RTVselecta}. Those proofs are complicated algebraic arguments that were unknown before even by the experts of weight functions. Nevertheless, those arguments provide a complete proof of Theorem \ref{MCmain}, see details in \cite{FRW}.

It is remarkable that resolutions for $\overline{M\Omega}_I$, $\overline{\Omega}_I$ can be constructed from which the motivic Chern classes are calculated easily. More importantly, for well chosen resolutions the formulas obtained for $\mC(M\Omega_I, \RRep_\mu)$,  $\mC(\Omega_I, \Fl_\mu)$ are {\em exactly the defining formulas of weight functions}. As a result, such an argument  {\em reproves} Theorem \ref{MCmain}, and through that, the Newton polygon properties of weight functions without calculation. In hindsight, such an argument would be a more natural proof of Theorem \ref{MCmain}. Since formally proving Theorem \ref{MCmain} is not needed anymore (as we mentioned above, \cite[Section 3.5]{RTVselecta} provides a proof) we will not present a proof based on resolution of singularities in detail. Also, such an argument based on resolution is implicitly present in \cite{csmCOHA} and an analogous argument is explicitly presented in the elliptic setting in \cite{RWell}. As a final remark let us mention our appreciation of the creativity of those who defined weight functions without seeing the resolution calculation for  $\mC(M\Omega_I, \RRep_\mu)$ and $\mC(\Omega_I, \Fl_\mu)$!

\medskip

The implication (e) is not formally needed to complete the proof of Theorem \ref{MCmain}. Yet, we find it important enough, that the we include the proof of the general such statement (namely motivic Chern classes before vs after GIT quotient) in Section \ref{sec-laci}.

\section{MC classes of Schubert cells in full flag manifolds: positivity and log-concavity}\label{sec:pos}

Characteristic classes of singularities often display positivity properties. Motivic Chern classes are not exceptions: in this section we present three conjectures on the signs (and concavity) of coefficients of MC classes in certain expansions.

\subsection{Positivity}\label{sec-p1}

For $\mu=(1,1,\ldots,1)$ ($n$ times) the space $\Fl_\mu$ is the full flag variety, let us rename it to $\Fl(n)$. Recall the presentation of its equivariant K-theory algebra from (\ref{KFl}). For brevity let us rename the ``last'' set of variables, the ``equivariant variables'' to  $\tau_i:=\alpha^{(n)}_i$.

The traditional geometric basis of $K_{B_n}(\Fl(n))[y]$ consists of the classes of the structure sheaves of the Schubert varieties (the closures of the Schubert cells). For $w=(w(1),\ldots,w(n))\in S_n$ let ${\bf [w]}$ denote the class of the structure sheaf of the closure of $\Omega_{(\{w(1)\},\{w(2)\},\ldots,\{w(n)\})}$. Also, set $\mC[w]=\mC(\Omega_{(\{w(1)\},\{w(2)\},\ldots,\{w(n)\})})$.

Our Theorem \ref{MCmain} can be used to expand the MC classes of orbits in the traditional basis of structure sheaves of Schubert varieties. For $n=3$ we obtain the following expansions.

{\def\ffff#1{{\,\bf[#1]}\hfill\break\phantom{\mC XXX}}
\def\fffg#1{{\,\bf[#1]}\break\phantom{\mC XXX}}
\def\fffglast#1{{\,\bf[#1].}}
\def\fffO{\hfill\break\phantom{\mC XXX}\hfill}

$\mC[{1,2,3}] = \left(\frac{\tau _1^2 }{\tau _3^2}y^3+\left(\frac{\tau _1^2}{\tau _2 \tau _3}+\frac{\tau _1}{\tau _3}+\frac{\tau _2 \tau _1}{\tau _3^2}\right) y^2+\left(\frac{\tau _1}{\tau _2}+\frac{\tau _1}{\tau _3}+\frac{\tau _2}{\tau _3}\right) y+1\right) \ffff{1,2,3}-\left(\left(\frac{\tau _1^2}{\tau _2 \tau _3}+\frac{\tau _1^2}{\tau _3^2}\right) y^3+\left(\frac{\tau _1^2}{\tau _2 \tau _3}+\frac{\tau _1}{\tau _2}+\frac{2 \tau _1}{\tau _3}+\frac{\tau _2 \tau _1}{\tau _3^2}\right) y^2+\left(\frac{\tau _1}{\tau _2}+\frac{\tau _1}{\tau _3}+\frac{\tau _2}{\tau _3}+1\right) y+1\right) \ffff{1,3,2}-\left(\left(\frac{\tau _1^2}{\tau _3^2}+\frac{\tau _2 \tau _1}{\tau _3^2}\right) y^3+\left(\frac{\tau _1^2}{\tau _2 \tau _3}+\frac{2 \tau _1}{\tau _3}+\frac{\tau _2 \tau _1}{\tau _3^2}+\frac{\tau _2}{\tau _3}\right) y^2+\left(\frac{\tau _1}{\tau _2}+\frac{\tau _1}{\tau _3}+\frac{\tau _2}{\tau _3}+1\right) y+1\right) \ffff{2,1,3}+\left(\left(\frac{\tau _1^2}{\tau _2 \tau _3}+\frac{\tau _1^2}{\tau _3^2}+\frac{\tau _1}{\tau _3}+\frac{\tau _2 \tau _1}{\tau _3^2}+\frac{\tau _2}{\tau _3}\right) y^3+\left(\frac{\tau _1^2}{\tau _2 \tau _3}+\frac{\tau _1}{\tau _2}+\frac{3 \tau _1}{\tau _3}+\frac{\tau _2 \tau _1}{\tau _3^2}+\frac{2 \tau _2}{\tau _3}+1\right) y^2\right.\fffO{}\left.+\left(\frac{\tau _1}{\tau _2}+\frac{\tau _1}{\tau _3}+\frac{\tau _2}{\tau _3}+2\right) y+1\right) \fffg{2,3,1}+\left(\left(\frac{\tau _1^2}{\tau _2 \tau _3}+\frac{\tau _1^2}{\tau _3^2}+\frac{\tau _1}{\tau _2}+\frac{\tau _1}{\tau _3}+\frac{\tau _2 \tau _1}{\tau _3^2}\right) y^3+\left(\frac{\tau _1^2}{\tau _2 \tau _3}+\frac{2 \tau _1}{\tau _2}+\frac{3 \tau _1}{\tau _3}+\frac{\tau _2 \tau _1}{\tau _3^2}+\frac{\tau _2}{\tau _3}+1\right) y^2\right.\fffO{}\left.+\left(\frac{\tau _1}{\tau _2}+\frac{\tau _1}{\tau _3}+\frac{\tau _2}{\tau _3}+2\right) y+1\right) \fffg{3,1,2}-\left(\left(\frac{\tau _1^2}{\tau _2 \tau _3}+\frac{\tau _1^2}{\tau _3^2}+\frac{\tau _1}{\tau _2}+\frac{2 \tau _1}{\tau _3}+\frac{\tau _2 \tau _1}{\tau _3^2}+\frac{\tau _2}{\tau _3}+1\right) y^3+\left(\frac{\tau _1^2}{\tau _2 \tau _3}+\frac{2 \tau _1}{\tau _2}+\frac{3 \tau _1}{\tau _3}+\frac{\tau _2 \tau _1}{\tau _3^2}+\frac{2 \tau _2}{\tau _3}+2\right) y^2\right.\fffO{}\left.+\left(\frac{\tau _1}{\tau _2}+\frac{\tau _1}{\tau _3}+\frac{\tau _2}{\tau _3}+2\right) y+1\right) \fffglast{3,2,1}$

\def\ffff#1{{\,\bf[#1]}\hfill\break\phantom{\mC({1,2,3})=}}
\def\fffflast#1{{\,\bf[#1].}}

$\mC[{1,3,2}] = \left(\frac{\tau _1^2 }{\tau _2 \tau _3}y^2+\left(\frac{\tau _1}{\tau _2}+\frac{\tau _1}{\tau _3}\right) y+1\right) \ffff{1,3,2}-\left(\left(\frac{\tau _1^2}{\tau _2 \tau _3}+\frac{\tau _1}{\tau _3}+\frac{\tau _2}{\tau _3}\right) y^2+\left(\frac{\tau _1}{\tau _2}+\frac{\tau _1}{\tau _3}+\frac{\tau _2}{\tau _3}+1\right) y+1\right) \ffff{2,3,1}-\left(\left(\frac{\tau _1^2}{\tau _2 \tau _3}+\frac{\tau _1}{\tau _2}\right) y^2+\left(\frac{\tau _1}{\tau _2}+\frac{\tau _1}{\tau _3}+1\right) y+1\right) \ffff{3,1,2}+\left(\left(\frac{\tau _1^2}{\tau _2 \tau _3}+\frac{\tau _1}{\tau _2}+\frac{\tau _1}{\tau _3}+\frac{\tau _2}{\tau _3}+1\right) y^2+\left(\frac{\tau _1}{\tau _2}+\frac{\tau _1}{\tau _3}+\frac{\tau _2}{\tau _3}+2\right) y+1\right) \fffflast{3,2,1}$

$\mC[{2,1,3}] = \left(\frac{\tau _1 \tau _2 }{\tau _3^2}y^2+\left(\frac{\tau _1}{\tau _3}+\frac{\tau _2}{\tau _3}\right) y+1\right) \ffff{2,1,3}-\left(\left(\frac{\tau _2}{\tau _3}+\frac{\tau _1 \tau _2}{\tau _3^2}\right) y^2+\left(\frac{\tau _1}{\tau _3}+\frac{\tau _2}{\tau _3}+1\right) y+1\right) \ffff{2,3,1}-\left(\left(\frac{\tau _1}{\tau _2}+\frac{\tau _1}{\tau _3}+\frac{\tau _2 \tau _1}{\tau _3^2}\right) y^2+\left(\frac{\tau _1}{\tau _2}+\frac{\tau _1}{\tau _3}+\frac{\tau _2}{\tau _3}+1\right) y+1\right) \ffff{3,1,2}+\left(\left(\frac{\tau _1}{\tau _2}+\frac{\tau _1}{\tau _3}+\frac{\tau _2 \tau _1}{\tau _3^2}+\frac{\tau _2}{\tau _3}+1\right) y^2+\left(\frac{\tau _1}{\tau _2}+\frac{\tau _1}{\tau _3}+\frac{\tau _2}{\tau _3}+2\right) y+1\right) \fffflast{3,2,1}$

$\mC[{2,3,1}] = \left(\frac{\tau _2 }{\tau _3}y+1\right) \ffff{2,3,1}-\left(\left(\frac{\tau _2}{\tau _3}+1\right) y+1\right) \fffflast{3,2,1}$

$\mC[{3,1,2}] = \left(\frac{\tau _1 }{\tau _2}y+1\right) \ffff{3,1,2}-\left(\left(\frac{\tau _1}{\tau _2}+1\right) y+1\right) \fffflast{3,2,1}$

$\mC[{3,2,1}] = \fffflast{3,2,1}$}

\smallskip
The following conjecture can be verified in the formulas above, and we also verified it for larger flag varieties ($n\leq 5$).

\begin{conjecture}\label{posconj}
Let $p, w\in S_n$. The coefficient of ${\bf [w]}$ in the expansion of $\mC[p]$ is a Laurent polynomial in $\tau_1,\ldots,\tau_n,y$ whose terms have sign
$(-1)^{\ell(p)-\ell(w)}$.
\end{conjecture}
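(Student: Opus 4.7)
The plan is to follow the Hecke algebra strategy that Aluffi-Mihalcea-Sch\"urmann-Su employ in \cite{AMSS} for CSM classes, adapted to the K-theoretic setting. The main ingredient is the Demazure-Lusztig action on $K_{B_n}(\Fl(n))[y]$: there should exist operators $\mathcal{T}_i$ (indexed by simple reflections $s_i$) satisfying the Hecke algebra relations of $S_n$, together with a recursion of the form $\mC[s_i p] = \mathcal{T}_i \mC[p]$ whenever $\ell(s_i p) = \ell(p)-1$. One can establish such a recursion either from the axiomatic characterization of Theorem \ref{mainth} (by checking that applying $\mathcal{T}_i$ to $\mC[p]$ still satisfies conditions (ii'), (iv''), (vi'') with respect to $s_i p$), or directly from the weight function formula of Theorem \ref{MCmain} together with standard Hecke algebra manipulations of the $W_I$. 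On the basis side, the structure sheaves ${\bf [w]}$ are acted on by the isobaric divided difference operators $\partial_i$, whose effect on each ${\bf [w]}$ is a simple, Bruhat-controlled transformation.

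I would proceed by downward induction on $\ell(p)$. The base case $p = w_0$ is immediate: $\Omega_{w_0}$ is a single $B_n$-fixed point, $\mC[w_0] = {\bf [w_0]}$, and the sole coefficient has sign $(-1)^{\ell(w_0)-\ell(w_0)} = +1$. For the inductive step, choose $s_i$ with $\ell(s_i p) = \ell(p)+1$, set $p' = s_i p$, and compute $\mC[p] = \mathcal{T}_i \mC[p']$ using the inductive expansion $\mC[p'] = \sum_{w \geq p'} c_{p',w}(\tau, y)\,{\bf [w]}$ whose coefficients have the signs predicted by the conjecture. An explicit formula for $\mathcal{T}_i$ of the shape $f(\tau,y)\partial_i + g(\tau,y)\cdot \mathrm{id}$, combined with the known action of $\partial_i$ on ${\bf [w]}$, lets one read off the new coefficients $c_{p,w''}$, and also yields automatically the support property that $c_{p,w''} \neq 0$ only for $w'' \in [p, w_0]$.

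The main obstacle is sign bookkeeping. The rational coefficients $f(\tau,y)$, $g(\tau,y)$ defining $\mathcal{T}_i$ typically have mixed-sign Laurent expansions in $\tau_i/\tau_{i+1}$, and the action of $\partial_i$ on ${\bf [w]}$ can itself flip sign according to whether $ws_i$ sits above or below $w$ in Bruhat order. One must show that, after aggregating all contributions to a fixed ${\bf [w'']}$, the resulting Laurent polynomial coefficient has every monomial carrying the single sign $(-1)^{\ell(p)-\ell(w'')}$. This is the K-theoretic counterpart of the delicate positivity analysis in \cite{AMSS}, and I expect it to require working at the level of the weight functions $W_I$ and their symmetrization, rather than a purely formal Hecke-theoretic manipulation---in particular, matching the sign behavior of individual monomials of $U_I$ against the sign-reversing structure of the transition matrix from the $\mC[p]$ basis to the ${\bf [w]}$ basis.
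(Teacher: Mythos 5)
This statement is a \emph{conjecture} in the paper, not a theorem: the authors offer computational evidence for $n\le 5$ and note that the authors of \cite{AMSS2} independently observed the same sign behavior, but they do not prove it. So there is no ``paper's own proof'' to compare against; what matters is whether your proposal actually establishes the claim.

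It does not. You have laid out a plausible inductive framework (Demazure--Lusztig recursion $\mC[p]=\mathcal T_i\,\mC[s_ip]$ plus the trivial base case $\mC[w_0]={\bf[w_0]}$), and that part is sound---the recursion is established in \cite{AMSS2}. But the entire content of the conjecture lives in the step you defer: showing that after applying $\mathcal T_i=f\,\partial_i+g\cdot\mathrm{id}$ and collecting contributions to each ${\bf[w'']}$, every monomial of the resulting Laurent polynomial carries the single sign $(-1)^{\ell(p)-\ell(w'')}$. You explicitly identify this as ``the main obstacle'' and say you ``expect it to require working at the level of the weight functions,'' without carrying that out. Note that the difficulty is real and not merely a matter of bookkeeping: $f$ and $g$ are rational in $\tau_i/\tau_{i+1}$ with mixed-sign expansions, cancellations do occur term-by-term, and even in the simpler cohomological CSM case the analogous positivity in \cite{AMSS} required the non-formal machinery of characteristic cycles and Verma-module shadows rather than a direct Hecke-recursion induction. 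Until that aggregation-and-sign argument is actually supplied, what you have is a research plan, not a proof, and the conjecture remains open.
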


We have been informed by the authors of \cite{AMSS2} that they also observed the sign behavior described in Conjecture 6.1.

\subsection{Log concavity}
To illustrate a new feature of the coefficients of the ${\bf [w]}$-expansions, let us make the substitution $\tau_i=1$ ($\forall i$), that is, consider the {\em non-equivariant} motivic Chern classes of the Schubert cells. For $n=4$ and for the open cell we obtain
{\def\fff#1{{\,\bf [#1]}\hfill\break\phantom{\mC[{1,2,3,4}]=}}
{\def\ffflast#1{{\,\bf [#1].}}
\vskip6pt
$\mC[{1,2,3,4}]=
\,(y+1)^6	\fff{1,2,3,4}
-(y+1)^5 (2 y+1)	\fff{1,2,4,3}
-(y+1)^5 (2 y+1)	\fff{1,3,2,4}
-(y+1)^5 (2 y+1)	\fff{2,1,3,4}
+(y+1)^4 (5 y^2+4 y+1)	\fff{1,3,4,2}
+(y+1)^4 (5 y^2+4 y+1)	\fff{1,4,2,3}
+(y+1)^4 (2 y+1)^2	\fff{2,1,4,3}
+(y+1)^4 (5 y^2+4 y+1)	\fff{2,3,1,4}
+(y+1)^4 (5 y^2+4 y+1)	\fff{3,1,2,4}
-(y+1)^3 (8 y^3+11 y^2+5 y+1)	\fff{1,4,3,2}
-(y+1)^3 (14 y^3+14 y^2+6 y+1)	\fff{2,3,4,1}
-(y+1)^3 (13 y^3+13 y^2+6 y+1)	\fff{2,4,1,3}
-(y+1)^3 (2 y+1) (6 y^2+4 y+1)	\fff{3,1,4,2}
-(y+1)^3 (8 y^3+11 y^2+5 y+1)	\fff{3,2,1,4}
-(y+1)^3 (14 y^3+14 y^2+6 y+1)	\fff{4,1,2,3}
+(y+1)^2 (24 y^4+36 y^3+21 y^2+7 y+1)	\fff{2,4,3,1}
+(y+1)^2 (23 y^4+35 y^3+22 y^2+7 y+1)	\fff{3,2,4,1}
+(y+1)^2 (3 y+1) (10 y^3+10 y^2+4 y+1)	\fff{3,4,1,2}
+(y+1)^2 (23 y^4+35 y^3+22 y^2+7 y+1)	\fff{4,1,3,2}
+(y+1)^2 (24 y^4+36 y^3+21 y^2+7 y+1)	\fff{4,2,1,3}
-(y+1) (44 y^5+85 y^4+66 y^3+29 y^2+8 y+1)	\fff{3,4,2,1}
-(y+1) (49 y^5+91 y^4+69 y^3+30 y^2+8 y+1)	\fff{4,2,3,1}
-(y+1) (44 y^5+85 y^4+66 y^3+29 y^2+8 y+1)	\fff{4,3,1,2}
+(64 y^6+163 y^5+169 y^4+98 y^3+37 y^2+9 y+1)	\fff{4,3,2,1}
$

\noindent In the expression above the permutations are ordered primary by length, secondary by lexicographical order.
\medskip

\noindent The coefficient of ${\bf[5,4,3,2,1]}$ in $\mC{[1, 2, 3, 4,5]}$ is
\[
1 + 14 y + 92 y^2 + 377 y^3 + 1120 y^4 + 2630 y^5 + 4972 y^6 +
 7148 y^7 + 7024 y^8 + 4063 y^9 + 1024 y^{10}.
\]

\begin{definition} The polynomial $\sum_{k=0}^d a_ky^k$ is said to be log-concave if
$a_k^2\geq a_{k-1}a_{k+1}$ for
$0<k<d$. It is strictly log-concave if the equality is strict.
\end{definition}

See Huh's survey article \cite{Huh} for the role of log-concavity in geometry and  combinatorics.

\begin{conjecture}\label{log_con_conj}
The coefficients in the ${\bf [w]}$-expansion of the non-equivariant $\mC[p]$ classes are strictly log-concave.
\end{conjecture}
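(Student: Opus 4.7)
\medskip

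The plan is to combine the explicit weight function description of $\mC[p]$ from Theorem \ref{MCmain} with a geometric interpretation of the expansion coefficients and, finally, with tools from the theory of Lorentzian polynomials (Br\"and\'en--Huh) to extract strict log-concavity.

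\smallskip

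First, I would obtain a usable formula for the coefficient $c_{p,w}(y)$ of ${\bf [w]}$ in $\mC[p]$. The dual basis to $\{{\bf [w]}\}_{w\in S_n}$ in $K_{B_n}(\Fl(n))[y,y^{-1}]$ with respect to the K-theoretic pairing is (up to explicit Todd-type factors) given by the structure sheaves of opposite Schubert varieties, so by the Atiyah--Bott--Lefschetz localization formula $c_{p,w}(y)$ is a rational expression in the fixed-point restrictions $\mC[p]|_u$ and $[w]|_u$, for $u\in S_n=\Fl(n)^{T}$. Using the weight function formula $\mC[p]|_u=\tilde W_p|_u$ and taking the non-equivariant limit $\tau_i\to 1$, the coefficient $c_{p,w}(y)$ becomes a manifestly computable polynomial in $y$.

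\smallskip

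Second, I would reinterpret $c_{p,w}(y)$ as the $\chi_y$-genus of an explicit locally closed subvariety of $\Fl(n)$, in the spirit of \cite{AMSS2} and the limit calculation sketched around \eqref{limit}--\eqref{BB-cell}. Concretely, pairing $\mC[p]$ against the dual Schubert sheaf and pushing forward to a point should identify $c_{p,w}(y)$, up to a sign $(-1)^{\ell(p)-\ell(w)}$ matching Conjecture \ref{posconj}, with $\chi_y(\Omega_p\cap S_w)$, where $S_w$ is the Bia{\l}ynicki--Birula slice at the fixed point $w$. In particular $c_{p,w}(y)$ (up to sign) would be the $\chi_y$-genus of a smooth quasi-projective variety, and hence its coefficients would be virtual Hodge--Deligne numbers of that variety.

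\smallskip

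Third, to promote a Hodge-theoretic description into log-concavity, I would try to show that the polynomial $|c_{p,w}(y)|\in \Z_{\ge 0}[y]$ is Lorentzian in the sense of Br\"anden--Huh, which automatically yields (ultra) log-concavity of its coefficient sequence. One route is to realize $\Omega_p\cap S_w$ as the complement of a normal crossings arrangement in a smooth projective variety (using a Bott--Samelson style resolution) and to match its Hodge--Deligne polynomial with the Hilbert series of a graded module whose Lorentzian-ness is known; a second, more direct route is to verify the Hodge--Riemann bilinear relations on a compactification, which imply log-concavity of the Hodge numbers in each column of the Hodge diamond.

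\smallskip

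The hard step, in either variant, is establishing the strictness of the inequalities $a_k^2>a_{k-1}a_{k+1}$. Log-concavity alone typically follows from a mixed Hodge structure or from a matroidal/Lorentzian interpretation, but strictness requires ruling out degenerate Hodge--Riemann equality cases; geometrically this is tied to the non-triviality of certain primitive cohomology classes on the compactification of $\Omega_p\cap S_w$. A secondary obstacle is that the Schubert slices $\Omega_p\cap S_w$ are in general singular, so one must work with a small resolution or with a virtual $\chi_y$-class and carefully track how the Lorentzian property transfers under the resulting alternating sum. I would expect that a careful Bott--Samelson resolution combined with the Br\"anden--Huh machinery handles the non-strict version, after which Conjecture \ref{log_con_conj} reduces to an irreducibility/non-degeneracy statement that one can hope to verify inductively on $\ell(p)-\ell(w)$.
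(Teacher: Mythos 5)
The statement you are trying to prove is labeled as a \emph{conjecture} in the paper; the authors do not give a proof. They only report that they ``checked Conjecture~\ref{log_con_conj} for $n\leq 6$'' by direct computation, so there is no paper argument to compare against.

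Your proposal is correctly diagnosed by yourself as a research sketch rather than a proof: you flag the strictness of the inequalities and the singularity of the Schubert slices as open issues. But even before those, there are unresolved steps that the sketch treats as if they were available. First, the claimed identification of the coefficient $c_{p,w}(y)$ with $\pm\chi_y(\Omega_p\cap S_w)$ is not established: the dual basis to $\{\mathbf{[w]}\}$ under the K-theoretic pairing involves opposite Schubert sheaves twisted by line bundles, and what localization gives you is an alternating sum over fixed points of rational functions, not an obvious $\chi_y$-genus; the BB-limit computation near \eqref{limit} computes a restriction $\mC(\Omega,M)_{|\Theta}/e^K(\nu_\Theta)$, which is a different quantity from a $[w]$-expansion coefficient. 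Even granting such an identification, $\chi_y$-genera of quasi-projective varieties are alternating sums of Hodge--Deligne numbers and have no general reason to be sign-definite or log-concave, so Conjecture~\ref{posconj} (itself only a conjecture) would have to be proved first and then combined with a positivity statement for the mixed Hodge numbers involved. Finally, the Br\"and\'en--Huh Lorentzian machinery concerns homogeneous multivariate polynomials; for a one-variable polynomial ``Lorentzian'' essentially \emph{is} ultra-log-concavity with no internal zeros, so invoking it here is circular unless you exhibit an ambient multivariate Lorentzian polynomial specializing to $|c_{p,w}(y)|$, and that construction is the actual content that is missing. In short, the outline is a reasonable direction to explore, but none of its three main steps is currently justified, and the statement remains an open conjecture.
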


We checked  Conjecture \ref{log_con_conj} for $n\leq 6$.
\subsection{Positivity in new variables}\label{sec-p2}

There is another kind of positivity which does not follow from the conjectured properties above. Namely, let us substitute $\frac{\tau_i}{\tau_{i+1}}=s_i+1$ and $y=-1-\delta$.
For example
$$\mC[{4,3,1,2}] = (1+ y \tfrac{\tau_1}{\tau_2}) {\bf[{4,3,1,2}]}-y {\bf[{4,3,2,1}]}
=  -(s_1 +\delta(1+  s_1)) {\bf[{4,3,1,2}]}+(1+\delta) {\bf[4,3,2,1]}\,.$$
The coefficient of $\bf[4,3,2,1]$ in $\mC[1,4,3,2]$ in the new variables equals
\bigskip

$(1 + s_1)^3 (1 + s_2)^2 (1 + s_3)+$

$
 \delta (9 + 17 s_1 + 12 s_1^2 + 3 s_1^3 + 14 s_2 + 28 s_1 s_2 + 22 s_1^2 s_2 +
    6 s_1^3 s_2 + 6 s_2^2 + 12 s_1 s_2^2 + 10 s_1^2 s_2^2 + 3 s_1^3 s_2^2 +
    8 s_3 + 15 s_1 s_3 + 11 s_1^2 s_3 + 3 s_1^3 s_3 + 13 s_2 s_3 +
    26 s_1 s_2 s_3 + 21 s_1^2 s_2 s_3 + 6 s_1^3 s_2 s_3 + 6 s_2^2 s_3 +
    12 s_1 s_2^2 s_3 + 10 s_1^2 s_2^2 s_3 + 3 s_1^3 s_2^2 s_3)+$

$
 \delta^2 (21 + 28 s_1 + 15 s_1^2 + 3 s_1^3 + 26 s_2 + 40 s_1 s_2 + 26 s_1^2 s_2 +
    6 s_1^3 s_2 + 9 s_2^2 + 15 s_1 s_2^2 + 11 s_1^2 s_2^2 + 3 s_1^3 s_2^2 +
    16 s_3 + 22 s_1 s_3 + 13 s_1^2 s_3 + 3 s_1^3 s_3 + 22 s_2 s_3 +
    35 s_1 s_2 s_3 + 24 s_1^2 s_2 s_3 + 6 s_1^3 s_2 s_3 + 9 s_2^2 s_3 +
    15 s_1 s_2^2 s_3 + 11 s_1^2 s_2^2 s_3 + 3 s_1^3 s_2^2 s_3)+$

$\delta^3 (14 + 14 s_1 + 6 s_1^2 + s_1^3 + 14 s_2 + 18 s_1 s_2 + 10 s_1^2 s_2 +
    2 s_1^3 s_2 + 4 s_2^2 + 6 s_1 s_2^2 + 4 s_1^2 s_2^2 + s_1^3 s_2^2 + 9 s_3 +
    10 s_1 s_3 + 5 s_1^2 s_3 + s_1^3 s_3 + 11 s_2 s_3 + 15 s_1 s_2 s_3 +
    9 s_1^2 s_2 s_3 + 2 s_1^3 s_2 s_3 + 4 s_2^2 s_3 + 6 s_1 s_2^2 s_3 +
    4 s_1^2 s_2^2 s_3 + s_1^3 s_2^2 s_3).$

\begin{conjecture}\label{conj:delta}
The coefficients of the $s_i,\delta$-monomials in the ${\bf [w]}$-expansion of the $\mC[p]$ classes have sign $(-1)^{\ell(w)}$. Note that here the sign depends only on the length the permutation $w$, not on the length of $p$, as it was in Conjecture \ref{posconj}.
\end{conjecture}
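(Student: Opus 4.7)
The plan is to combine the weight-function presentation of $\mC[p]$ from Theorem~\ref{MCmain} with a sign analysis under the variable change $\tau_i/\tau_{i+1}=1+s_i$, $y=-1-\delta$. By Theorem~\ref{MCmain}, $\mC[p]$ is represented by the weight function $\tilde{W}_{I_p}$, so the coefficient of $\mathbf{[w]}$ in the structure-sheaf expansion is obtained by $K_{B_n}$-localization at the torus fixed points of $\Fl(n)$ followed by inverting the upper-triangular change-of-basis matrix from the fixed-point basis to the structure-sheaf basis. This yields every such coefficient as an explicit Laurent polynomial in $\tau_i$ and $y$. I would emphasize that Conjecture~\ref{posconj} does \emph{not} immediately imply Conjecture~\ref{conj:delta}: in the original $(\tau,y)$ variables all monomials of the coefficient share a common sign $(-1)^{\ell(p)-\ell(w)}$, but under the substitution each $y^b$ contributes sign $(-1)^b$, and a uniform sign in $(s,\delta)$ requires genuine cancellations between different $y$-degrees.

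Strategically, I would first reduce to the ``classical limit'' $\delta=0$, i.e.\ $y=-1$. At this specialization the motivic Chern class degenerates, after a Todd-type twist, to the Chern-Schwartz-MacPherson class of $\Omega_p$, and the expansion of $\mC[p]$ in $\{\mathbf{[w]}\}$ matches, at the lowest nonvanishing order in $\delta$, the expansion of $\csm(\Omega_p)$ in the Schubert basis of $H^*(\Fl(n))$, equivariantly refined through $s_i$. The corresponding signs are controlled by the Aluffi-Mihalcea-Sch\"urmann-Su positivity theorem for Schubert cells. Conjecture~\ref{conj:delta} then asserts that this sign propagates to all higher orders in $\delta$, i.e.\ it is a genuine K-theoretic refinement of the AMSS statement.

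The natural tool for the higher-order statement is a Demazure-Lusztig-type recursion for motivic Chern classes, $\mC[ps_i]=T_i\mC[p]$ whenever $\ell(ps_i)>\ell(p)$, available from \cite{AMSS2,FRW}. Starting from the base case $\mC[w_0]=\mathbf{[w_0]}$, which trivially satisfies the sign condition, I would aim to prove inductively that $T_i$ preserves the sign pattern of the $\mathbf{[w]}$-coefficients in the $(s,\delta)$-expansion. Equivalently, one could search for a combinatorial model, for instance a K-theoretic variant of pipe dreams or bumpless pipe dreams, in which each diagram contributes a single monomial whose sign after substitution is $(-1)^{\ell(w)}$.

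The principal obstacle will be the inductive step. The operator $T_i$ contains a divided-difference component whose numerator and denominator both have mixed sign after the substitution; positivity emerges only after pairing contributions at the neighboring fixed points $w$ and $ws_i$, and controlling these cancellations requires structural input beyond the axioms of Section~\ref{MCax} and the Newton-polygon analysis given there. An alternative route that sidesteps the recursion is to construct a resolution of $\overline{\Omega_p}$ with simple normal crossings for which the Aluffi push-forward formula~\eqref{tauydef}, after the variable change, becomes a manifestly sign-coherent sum over the strata; such a resolution would also be a natural candidate for attacking the stronger log-concavity Conjecture~\ref{log_con_conj}.
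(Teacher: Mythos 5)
There is no proof of this statement in the paper for your proposal to be compared against: the statement is labeled as a \emph{conjecture}, presented in Section~\ref{sec:pos} only with supporting examples and with an accompanying remark relating it to \cite{WeSelecta} and \cite{AGM}. The authors do not claim a proof, and nothing in the text resolves it. Your submission is, correctly and self-consciously, a research plan rather than a proof --- you identify the inductive step for the Demazure-Lusztig operator $T_i$ as the principal obstruction and you leave it open. So the result remains unestablished in both the paper and your write-up; the task of ``comparing proofs'' does not apply here.

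A few specific cautions on the plan itself. Your reduction to the $\delta=0$ slice does not land directly in the AMSS setting: the CSM class is recovered from the motivic Chern class only after applying a Chern-character/Todd-type transformation and a $y\to-1$ limit, and the AMSS sign theorem is stated for the expansion of CSM classes in the \emph{cohomological} Schubert basis, not for the K-theoretic $\mathbf{[w]}$-basis in which Conjecture~\ref{conj:delta} is formulated. Matching the $\delta^0$-coefficient of the K-theoretic expansion against the AMSS coefficient would already require a non-trivial comparison lemma which you do not supply. You are right that Conjecture~\ref{posconj} does not imply Conjecture~\ref{conj:delta} (the sign exponents $\ell(p)-\ell(w)$ versus $\ell(w)$ genuinely differ, and the substitution $y=-1-\delta$ scrambles $y$-degrees), so the ``genuine cancellations'' you point to must be exhibited rather than asserted. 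The recursion-plus-base-case-at-$w_0$ outline and the alternative resolution/push-forward route are both reasonable directions that align with how the authors compute these classes elsewhere in the paper, but neither is carried out, and carrying either one out is exactly the open problem.
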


\begin{remark}
The positivity properties of Sections \ref{sec-p1} and \ref{sec-p2}  imply positivity properties of the restrictions of MC classes to fixed points. These ``local'' positivity properties are another instances of the positivity in the $\delta$-variables discussed in \cite[\S15]{WeSelecta}. It is implied by the Conjecture \ref{conj:delta} and \cite{AGM}.
\end{remark}

\section{Local picture}\label{sec-local}
To illustrate the pretty convex geometric objects encoded by motivic Chern classes in this section we present some pictures. Consider $\Fl(3)$ and its six Schubert cells parameterized by permutations. According to Theorem \ref{MCmain} the six fixed point restrictions of these classes satisfy some strict containment properties. Below we present the Newton polygons (and some related weights) of the six fixed point restrictions (rows) of the six Schubert cells (columns). The Newton polygons live in the $\tau_1+\tau_2+\tau_3=0$ plane of $\R^3$ (with coordinates $\tau_i$), hence instead of 3D pictures we only draw the mentioned plane.

\begin{center}\obrazek{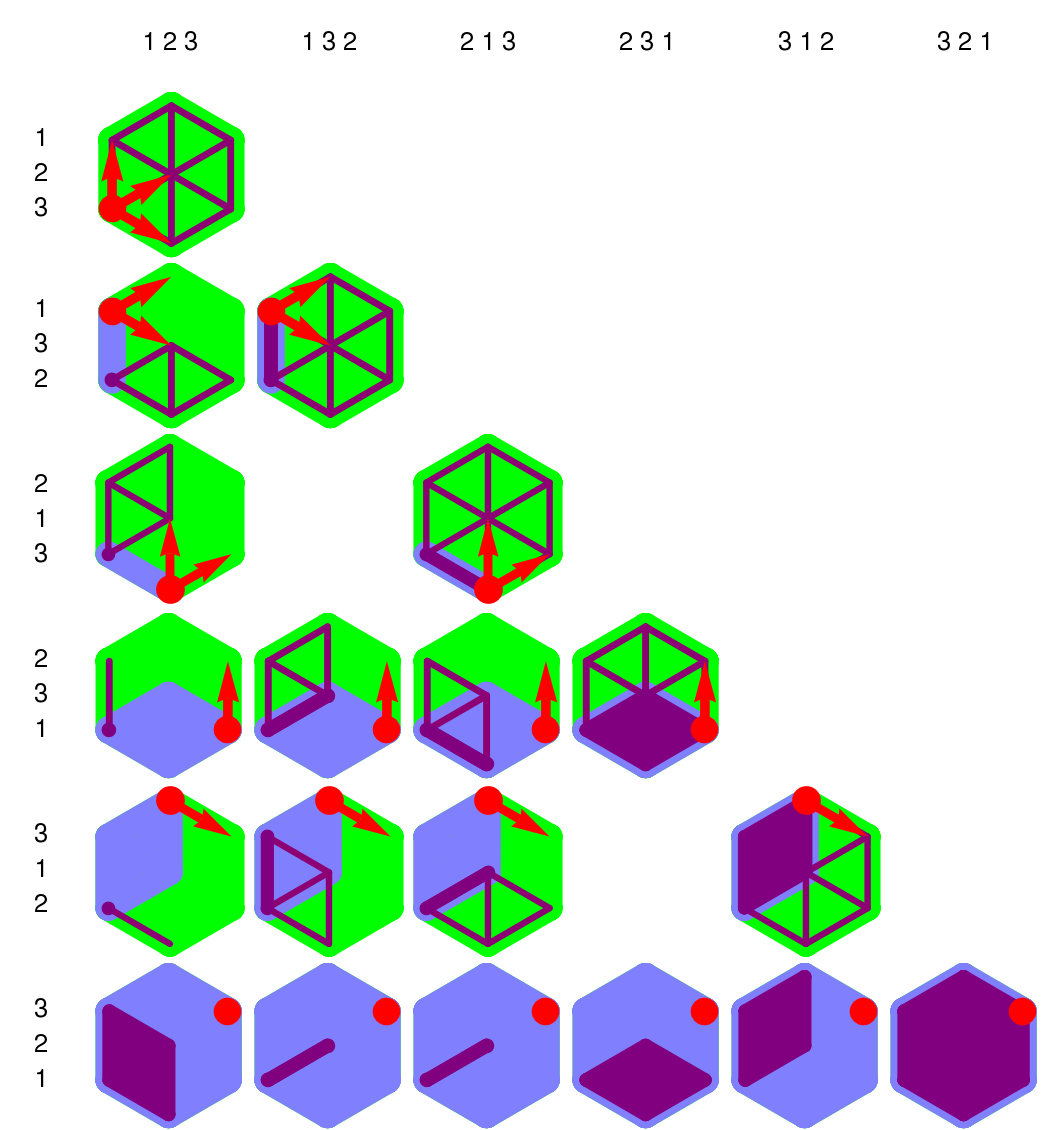}{12cm}\end{center}

In  blue --- Newton polygon of $e^K(\nu_\Theta)=\lambda_{-1}(\nu^*_\Theta)$.

In red --- cotangent weight $T^*_\Theta$.

In violet --- Newton polygon of $\mC(\Omega\cap S_\Theta)$, where $S_\Theta$ is a slice to the orbit.

Edges in violet --- Newton polygon of $\mC(\Omega)$.

\section{Transversality and GIT quotients} \label{sec-laci}

Both CSM and MC classes have their Segre version (cf. Remark \ref{segre}): for example for the K-theory case we have
\[ \mS(f:Z\to M):=\mC(f)/\lambda_y(T^*M).\]
\subsection{Transversality of motivic Segre class}\label{sec:trans}
Motivic Segre classes behave well for transversal pull-back, for a fine notion of transversality.
To formulate this notion let us recall the definition of the motivic Chern class given in \cite{FRW}. 
Consider the case of $\mC(f)$ where $f:U\to M$ is an inclusion of smooth varieties, but the inclusion is not necessarily closed (or equivalently, proper).
\begin{definition} A proper normal crossing extension (PNC) of $f:U \to M$ is a morphism $\bar f:Y\to M$ and an inclusion $j:U\hookrightarrow Y$ such that
\begin{enumerate}
  \item $f=\bar f\circ j$,
  \item $Y$ is smooth
  \item $\bar f$ is proper
  \item The exceptional divisor $D:=Y\setminus j(U)=\bigcup_{i=1}^sD_i$  is a simple normal crossing divisor (i.e. the $D_i$'s are smooth hypersurfaces in transversal position).
\end{enumerate}

If $f$ is $G$-equivariant for some group $G$ acting on $U$ and $M$, then we require all maps to be $G$-equivariant in the definition.
\end{definition}
\begin{remark}
  Notice that if $f$ is injective then $\bar f:Y\to M$ is a resolution of the closure of $f(U)$.
\end{remark}
We can use the existence of proper normal crossing extensions to define the ($G$-equivariant) motivic Chern class:
 \begin{definition}\label{de:mc}
Suppose that  $f:U\to M$ is a map of smooth $G$-varieties and let $\bar f:Y\to M$ be a proper normal crossing extension of $f$.
 For $I\subset\underline{s}=\{1,2,\dots,s\}$ let $D_I=\bigcap_{i\in I}D_i$, $f_I=\bar f{|D_I}$, in particular $f_\emptyset=\bar f$.
Then
\begin{equation}\label{alternating-definition}\mC(f):=\sum_{I\subset \underline{s}}\;(-1)^{|I|}f_{I*}\lambda_y(T^*D_I)\,.\end{equation}
 \end{definition}
It is explained in \cite{FRW} that this is a good definition: independent of the PNC chosen.

\begin{definition} Let $N$ be a smooth variety. Then $g:N\to M$ is motivically transversal
\footnote{This idea appeared already in an early version of \cite{Ohmoto2}.}  to $f:U \to M$ if there is a PNC $\bar f:Y\to M$ for $f$ such that $g$ is transversal to all the $D_I$'s. 
\end{definition}

\begin{theorem} If $g:N\to M$ is motivically transversal to $f:U \to M$ then
\[ \mS(\tilde{f})=g^*\mS(f),\]
where $\tilde{f}: \tilde{U}\to N$ is the map in the pull-back diagram
\[\xymatrix{\tilde{U}\ar[r]^{\tilde{f}} \ar[d] &N \ar[d]^g \\ U  \ar[r]^{f}& M. &} \]
\end{theorem}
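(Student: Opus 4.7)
The plan is to lift the PNC hypothesis to the pull-back and then apply the alternating definition \eqref{alternating-definition} of $\mC$ termwise. Pick a PNC $\bar f:Y\to M$ of $f$ with exceptional divisor $D=\bigcup_{i=1}^s D_i$ to which $g$ is motivically transversal. First I would form the fibre square $\tilde Y:=Y\times_M N$ with projections $\tilde g:\tilde Y\to Y$ and $\bar{\tilde f}:\tilde Y\to N$, and set $\tilde D_i:=D_i\times_M N$ and $\tilde D_I=\bigcap_{i\in I}\tilde D_i=D_I\times_M N$. Transversality of $g$ to each $D_I$ makes $\tilde Y$ and every $\tilde D_I$ smooth of the expected codimension, so $\tilde D=\bigcup\tilde D_i$ is simple normal crossing; $\bar{\tilde f}$ is proper by base change, and $\tilde Y\setminus\tilde D=U\times_M N=\tilde U$. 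Hence $\bar{\tilde f}:\tilde Y\to N$ is a PNC for $\tilde f$, with strata $\tilde f_I:\tilde D_I\to N$ mapping via $\tilde g_I:\tilde D_I\to D_I$ to $f_I$.

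Next I would compare cotangent contributions stratum by stratum. In each cartesian transversal square above $g$, the virtual tangent bundles satisfy the standard identity
\[
T\tilde D_I - \tilde f_I^*TN \;=\; \tilde g_I^*\bigl(TD_I - f_I^*TM\bigr)\in K_G(\tilde D_I),
\]
so passing to $\lambda_y$ of duals and using the multiplicativity of $\lambda_y$ on virtual bundles,
\[
\lambda_y(T^*\tilde D_I)\;=\;\tilde g_I^*\lambda_y(T^*D_I)\cdot\frac{\tilde f_I^*\lambda_y(T^*N)}{\tilde f_I^*g^*\lambda_y(T^*M)}.
\]
Pushing forward by the proper map $\tilde f_{I*}$, extracting the $\tilde f_I^*$-ratio by the projection formula, and applying base change $\tilde f_{I*}\tilde g_I^* = g^* f_{I*}$ gives
\[
\tilde f_{I*}\lambda_y(T^*\tilde D_I)\;=\;\frac{\lambda_y(T^*N)}{g^*\lambda_y(T^*M)}\cdot g^*\bigl(f_{I*}\lambda_y(T^*D_I)\bigr).
\]

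Summing over $I$ with signs $(-1)^{|I|}$ and invoking Definition \ref{de:mc} on both sides yields
\[
\mC(\tilde f)\;=\;\frac{\lambda_y(T^*N)}{g^*\lambda_y(T^*M)}\cdot g^*\mC(f),
\]
and dividing by $\lambda_y(T^*N)$ together with the definition of $\mS$ gives the desired identity $\mS(\tilde f)=g^*\mS(f)$.

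The main hurdle is the base-change step $\tilde f_{I*}\tilde g_I^* = g^* f_{I*}$ in $G$-equivariant K-theory, since $g$ is not assumed to be flat or a regular embedding. This is precisely where motivic transversality earns its name: transversality of $g$ to every stratum of the PNC makes each of the squares in question Tor-independent, so derived and ordinary pull-back agree and the usual base-change formula applies (cf.\ \cite[\S5]{ChGi}). A secondary point is independence of the chosen PNC, but this follows from the well-definedness of $\mC(\tilde f)$ established in \cite{FRW}, which also guarantees that the resulting identity does not depend on any of the choices made above.
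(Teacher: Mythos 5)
Your proposal is correct and follows essentially the same route as the paper. The paper's proof is the single sentence ``The proof is a straightforward consequence of the fact that a motivically transversal pull-back of a PNC is PNC,'' and your argument is the fully spelled-out version of that: you form $\tilde Y = Y\times_M N$ and the $\tilde D_I$, observe that transversality makes $\bar{\tilde f}:\tilde Y\to N$ a PNC for $\tilde f$, and then run the alternating-sum definition of $\mC$ through the virtual tangent identity, the projection formula, and Tor-independent base change to arrive at $\mS(\tilde f)=g^*\mS(f)$.
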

The proof is a straightforward consequence of the fact that a motivically transversal pull-back of a PNC is PNC. We can extend the notion to non-smooth varieties $Z$ requiring the existence of a stratification of $Z$  such that $g$ is motivically transversal to the restrictions of $f$ to these strata.

Being motivically transversal is a very restrictive condition. However it holds in some special situations:
\begin{proposition} \label{trans2allorbit} Let $U,M$ be smooth and a connected group $G$ is acting on $M$. Assume that $f:U\to M$ is transversal to all $G$-orbits. Then $f$ is motivically transversal to all $G$-invariant subvarieties of $M$.
\end{proposition}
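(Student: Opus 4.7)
The plan is to verify the extended definition of motivic transversality for each $G$-invariant subvariety $V\subset M$ directly, via a stratification into smooth $G$-invariant pieces and an equivariant resolution of singularities.

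First, stratify $V$ as $V=\bigsqcup_\alpha W_\alpha$, where each $W_\alpha$ is a smooth, locally closed, $G$-invariant subvariety of $M$ (for instance the orbit-type stratification, or the orbits themselves when there are finitely many). By the extension of motivic transversality to singular targets recalled just after Definition~\ref{de:mc}, it suffices to show that $f$ is motivically transversal to each inclusion $W_\alpha\hookrightarrow M$.

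Fix such a smooth $G$-invariant locally closed $W\subset M$. Applying equivariant (functorial) resolution of singularities in characteristic zero to the closure $\overline W\subset M$, construct a $G$-equivariant PNC $\bar h\colon Y\to M$ of $W\hookrightarrow M$: $Y$ is smooth, $j\colon W\hookrightarrow Y$ is an open embedding, $\bar h$ is proper and $G$-equivariant, and $Y\setminus W=\bigcup_{i=1}^s D_i$ is a $G$-invariant simple normal crossing divisor. Since $G$ is connected, it preserves each irreducible component $D_i$ individually, and hence every intersection $D_I=\bigcap_{i\in I}D_i$ is a smooth $G$-invariant subvariety of $Y$; consequently each restriction $\bar h|_{D_I}\colon D_I\to M$ is a $G$-equivariant morphism.

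The key step is to check that $f$ is transversal to each $\bar h|_{D_I}$. Pick $(u,d)\in U\times D_I$ with $f(u)=\bar h(d)=:m$. Because $D_I$ is $G$-invariant we have $G\cdot d\subset D_I$, so $T_d(G\cdot d)\subseteq T_d D_I$; equivariance of $\bar h|_{D_I}$ then yields a surjection $T_d(G\cdot d)\twoheadrightarrow T_m(G\cdot m)$, giving
\[
T_m(G\cdot m)\;\subseteq\; d(\bar h|_{D_I})(T_d D_I).
\]
Combining this with the hypothesis $df(T_u U)+T_m(G\cdot m)=T_m M$ produces $df(T_u U)+d(\bar h|_{D_I})(T_d D_I)=T_m M$, the required transversality.

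The single nontrivial ingredient is the existence of the $G$-equivariant PNC, which rests on a functorial resolution theorem of Bierstone--Milman, Villamayor, or Koll\'ar; functoriality makes the resolution commute with the $G$-action automatically. Once this is granted the argument is conceptually transparent: transversality to $G$-orbits is tailored exactly to the observation that every $G$-invariant smooth stratum---and, by extension, every boundary stratum $D_I$ arising in an equivariant PNC---has tangent space containing the tangent space of the $G$-orbit through each of its points.
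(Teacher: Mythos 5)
Your argument is correct and is essentially the intended one. The paper itself does not spell out the proof but defers to the proof of Lemma~5.1 of \cite{FRW}; the mechanism there is exactly the chain you identify: reduce to smooth $G$-invariant strata, take a $G$-equivariant PNC via functorial (hence equivariant) resolution, observe that connectedness of $G$ makes each component $D_i$ --- and so each $D_I$ --- $G$-invariant, and then use $G$-equivariance of $\bar h|_{D_I}$ to conclude that $T_m(G\cdot m)\subseteq d(\bar h|_{D_I})(T_dD_I)$, so transversality of $f$ to orbits upgrades to transversality of $f$ to every $\bar h|_{D_I}$. Your verification of the surjection $T_d(G\cdot d)\twoheadrightarrow T_m(G\cdot m)$ via the identity $\bar h(g\cdot d)=g\cdot m$ and the surjectivity of $\mathfrak g\to T_d(G\cdot d)$ and $\mathfrak g\to T_m(G\cdot m)$ is clean and fills in exactly the point the paper calls ``simple.'' No gaps.
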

The proof is simple and can be found in the proof of Lemma 5.1 in \cite{FRW}.

\subsection{$G$-equivariant motivic Segre class as a universal motivic Segre class for degeneracy loci}
Now we prove a statement (Corollary \ref{locus}) expressing the fact that the $G$-equivariant motivic Segre class is a universal formula for motivic Chern classes of degeneracy loci. The analogous statement for the Segre version of the CSM class was proved in \cite{Ohmoto2}. In K-theory the proof is simpler because reference to classifying spaces and maps can be avoided. Suppose that $\pi_P:P\to M$ is a principal $G$-bundle over the smooth $M$ and $A$ is a smooth $G$-variety. Then we can define a map
\begin{equation}\label{association}a:K_G(A)\to K(P\times_G A)  \end{equation}
by association: For any $G$-vector bundle $E$  over $A$ the associated bundle $P\times_G E$ is a vector bundle over $P\times_G A$.

\smallskip
In the rest of the paper we will use the notation $\mS(A,B)$ for $\mS(f:A\to B)$ when the map $f$ is clear from the context. The diagrams 
\[
\xymatrix{
P \ar[d]_{\pi_P}^{G-principal}& & & G\acts(i:Y\subset A) & 
P\times_G Y 
\ar[rd]_{\pi_Y}
\ar@{^{(}->}[rr]^{i_P}
& & P\times_G A \ar[ld]^{\pi_A}\\
M & & & & & M & 
}
\]
will be useful when reading the proof of the next Proposition.

\begin{proposition} Let $Y\subset A$ be $G$-invariant. Then
\[ \mS(P\times_G Y, P\times_G A)=a\big(\mS_G(Y, A)\big).\]
\end{proposition}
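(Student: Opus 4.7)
The plan is to push a $G$-equivariant PNC for $Y\subset A$ through the association construction and then compare the defining alternating sums of $\mC$ term by term. First, I would pick a $G$-equivariant PNC $\bar f\colon \tilde Y\to A$ for $Y\hookrightarrow A$, with simple normal-crossing exceptional divisor $D=\bigcup_i D_i$. Applying the functor $P\times_G(-)$ and trivializing $P$ locally on $M$, I would verify that $\bar f_P\colon P\times_G \tilde Y\to P\times_G A$ is again a PNC: properness and the simple normal-crossing condition are local in $M$, hence descend from $\bar f$, and the complement of $P\times_G Y$ is exactly $\bigcup_i (P\times_G D_i)$.

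Next I would record two compatibilities of the association map $a$ of~\eqref{association}. First, because $T^*(P\times_G Z)$ fits into a split sequence whose pieces are $\pi_Z^*T^*M$ and the relative cotangent bundle $P\times_G T^*Z$, one has
\[
\lambda_y(T^*(P\times_G Z)) \;=\; \pi_Z^*\lambda_y(T^*M)\cdot a\bigl(\lambda_y(T^*Z)\bigr)
\]
for any smooth $G$-variety $Z$; in particular this applies to $Z=A$ and to each $Z=D_I$. Second, $a$ commutes with proper pushforward along associated maps: for a $G$-equivariant proper $h\colon Z\to A$ and $\beta\in K_G(Z)$,
\[
(h_P)_*\,a(\beta)\;=\;a(h_*\beta),
\]
which follows from flat base change along local trivializations of $\pi_P$ (or equivalently from the definition of $h_{P*}$ via descent for the principal $G$-bundle).

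Armed with these, I would apply Definition~\ref{de:mc} to $\bar f_P$, use the projection formula to pull $\pi_{D_I}^*\lambda_y(T^*M)$ out of each pushforward, and obtain
\[
\mC(P\times_G Y, P\times_G A)
\;=\;\pi_A^*\lambda_y(T^*M)\cdot a\!\left(\sum_{I\subset\underline{s}}(-1)^{|I|}\,\bar f_{I*}\,\lambda_y(T^*D_I)\right)
\;=\;\pi_A^*\lambda_y(T^*M)\cdot a\bigl(\mC_G(Y,A)\bigr).
\]
Dividing by $\lambda_y(T^*(P\times_G A))=\pi_A^*\lambda_y(T^*M)\cdot a(\lambda_y(T^*A))$ and using that $a$ is a ring map yields
\[
\mS(P\times_G Y, P\times_G A)\;=\;a\!\left(\frac{\mC_G(Y,A)}{\lambda_y(T^*A)}\right)\;=\;a\bigl(\mS_G(Y,A)\bigr).
\]

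The main obstacle is the second compatibility: formally identifying $a\circ h_* = (h_P)_*\circ a$ in $G$-equivariant $K$-theory, which requires checking base change for the Cartesian square relating $Z$, $P\times Z$, and $P\times_G Z$, together with $G$-descent. Everything else (the PNC property of $\bar f_P$, the splitting of $T^*(P\times_G Z)$, and the ring-map property of $a$) is standard and local on $M$.
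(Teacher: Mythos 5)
Your proof is correct and follows essentially the same route as the paper: both hinge on the splitting $T(P\times_G Z)=\pi_Z^*TM\oplus(P\times_G TZ)$, the projection formula to pull $\pi^*\lambda_y(T^*M)$ through the pushforward, the fact that $a$ intertwines $G$-equivariant pushforward with pushforward of associated bundles, and that $a$ is a ring map. The paper streamlines this by first treating the case where $Y$ is a closed smooth subvariety and then remarking that the general case reduces to it via the alternating sum in Definition~\ref{de:mc}; you instead carry the alternating sum through from the start, which requires your (correct) preliminary observation that $P\times_G(-)$ applied to a $G$-equivariant PNC for $Y\subset A$ yields a PNC for $P\times_G Y\subset P\times_G A$. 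The substance of the argument is the same.
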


\begin{proof} To calculate the left hand side we need to calculate $\lambda_y(T^*P\times_G A)$ first. For the tangent space we have
\[  T(P\times_G A)=\pi_A^*(TM)\oplus (P\times_G TA),  \]
where $\pi_A:P\times_G A\to M$ is the projection. Consequently
\[ \lambda_y(T^*(P\times_G A))=\pi_A^*\lambda_y(T^*M)\cdot a(\lambda_y^G(T^*A)).\]
Assume first that $Y$ is a closed submanifold of $A$. To ease the notation we will use the $\lambda(M)=\lambda_y(T^*M)$ abbreviation. Then
 \[\mS(P\times_G Y, P\times_G A)=\frac{i_*^P\lambda(P\times_G Y)}{\lambda(P\times_G A)}=
                                       \frac{i_*\big(\pi_Y^*\lambda(M)a(\lambda^G( Y)\big)}{\pi_A^*\lambda(M)a(\lambda^G(A))},        \]
 where $\pi_Y:P\times_G Y\to M$ is the projection and $i:Y\to A$, $i_P:P\times_G Y\to P\times_G A$ are the inclusions. Then noticing that $\pi_Y=\pi_A\circ i_P$ and applying the adjunction formula we arrive at the right hand side.

 For general $Y$ we can use Definition \ref{de:mc} to reduce the calculation to the smooth case.
\end{proof}

\begin{corollary} \label{locus}Suppose that $\sigma:M\to P\times_G A$ is a section motivically transversal to $P\times_G Y$. Then
\begin{equation}\label{locuseq}
\mS(Y(\sigma), M)=\sigma^*a\big(\mS_G(Y, A)\big),
\end{equation}
where $Y(\sigma)=\sigma^{-1}(P\times_G Y)$ is the $Y$-locus of the section $\sigma$.

If $A$ is a vector space then $\sigma^*$ can be identified with the identity map  $K(P\times_GA)\simeq K(P/G)=K(M)$.
\end{corollary}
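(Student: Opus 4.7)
The plan is to derive Corollary \ref{locus} by a simple two-step chain: first use the preceding Proposition to identify $\mS(P\times_G Y,\, P\times_G A)$ with $a(\mS_G(Y,A))$, then use the transversality theorem for motivic Segre classes to pull this identity back along the section $\sigma$.

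More precisely, I would start by observing that the $Y$-locus is by definition the fiber product
\[
\xymatrix{
Y(\sigma)\ar[r]\ar[d] & M \ar[d]^{\sigma}\\
P\times_G Y \ar[r]^{i_P} & P\times_G A,
}
\]
since $Y(\sigma)=\sigma^{-1}(P\times_G Y)$. Here $\sigma$ is motivically transversal to $P\times_G Y$ by hypothesis, so the transversality theorem for motivic Segre classes (stated just after Definition \ref{de:mc}) applies and yields $\mS(Y(\sigma), M) = \sigma^*\,\mS(P\times_G Y,\, P\times_G A)$. Substituting the formula from the preceding Proposition, $\mS(P\times_G Y,\, P\times_G A) = a(\mS_G(Y,A))$, then gives exactly the desired equation \eqref{locuseq}.

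For the final assertion, when $A$ is a $G$-representation the associated bundle $\pi_A : P\times_G A \to P/G = M$ is a vector bundle, and $\sigma$ is a section of this bundle. Any section of a vector bundle induces the same map on $K$-theory as the zero section, and both are inverse to the pullback $\pi_A^*$, which is an isomorphism by the Thom isomorphism in equivariant $K$-theory. Thus $\sigma^*$ on $K(P\times_G A)$ may be identified with $(\pi_A^*)^{-1}$, giving the natural identification $K(P\times_G A)\simeq K(M)$ under which $\sigma^*$ becomes the identity on elements pulled from $M$.

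The plan contains essentially no obstacle beyond invoking the two ingredients correctly; the conceptual burden has been front-loaded into the transversality theorem and into the computation of $\mS(P\times_G Y,\, P\times_G A)$ in the preceding Proposition. The one point that deserves attention is verifying that the motivic transversality hypothesis on $\sigma$ with respect to $P\times_G Y$ really does imply the hypothesis needed by the transversality theorem --- i.e.\ that a PNC of the inclusion $i_P: P\times_G Y \hookrightarrow P\times_G A$ pulls back along $\sigma$ to a PNC of $Y(\sigma) \hookrightarrow M$. This is immediate from the definition of motivic transversality (transversality to every stratum of a chosen PNC is preserved under pullback), so the full argument is essentially a formal composition of the two cited results.
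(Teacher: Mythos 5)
Your proposal is correct and follows the route the paper intends: the Corollary is meant to be the formal composition of the preceding Proposition (which gives $\mS(P\times_G Y,\,P\times_G A)=a(\mS_G(Y,A))$) with the transversality theorem for motivic Segre classes, applied to the section $\sigma$ and the inclusion $i_P$. Your identification of the fiber-product square and the substitution are exactly the intended argument, and your final observation—that for $A$ a vector space all sections of the vector bundle $P\times_G A\to M$ induce the inverse of $\pi_A^*$ on $K$-theory—correctly justifies the last assertion. One small terminological point: the isomorphism $\pi_A^*:K(M)\xrightarrow{\ \sim\ }K(P\times_G A)$ is homotopy invariance of $K$-theory for a vector bundle over a smooth base (the zero section is a deformation retract), not the Thom isomorphism, which is a different statement involving compactly supported or Thom-space $K$-theory; the conclusion you draw from it is nonetheless correct.
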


\subsection{GIT quotients} With the applications in mind we use the following simple
version of GIT quotient: Let $V$ be a $G$-vector space for a connected algebraic group $G$ and assume that $P\subset V$ is an open $G$-invariant subset such that $\pi:P\to P/G$ is a principal $G$-bundle over the smooth $M:=P/G$. (We want $P$ to be a right $G$-space so we define $pg:=g^{-1}p$.)

To state the main theorem we first introduce the \emph{K-theoretic Kirwan map} $\kappa: K_G(V)\to K(M)$ as the composition of the pull back $K(P\times_G V)\to K(M)$ via the zero section and the association map $K_G(V)\to K(P\times_G V)$ given in Equation \eqref{association} (cf. the RHS of \eqref{locuseq}). Notice again that this definition is simpler than the cohomological analogue.

\begin{theorem} \label{te:git} Let $V$ be a $G$-vector space and assume that $P\subset V$ is an open $G$-invariant subset such that $\pi:P\to P/G$ is a principal $G$-bundle over the smooth $M:=P/G$. Let $Y\subset M$. Then
\[\mS(Y, M)=\kappa(\mS_G(\pi^{-1}(Y), V)).\]
\end{theorem}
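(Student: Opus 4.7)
The plan is to realize $Y$ as the intersection locus of a canonical section of the associated vector bundle $P \times_G V \to M$ and then apply Corollary \ref{locus}. The inclusion $\phi: P \hookrightarrow V$ is $G$-equivariant, so it descends to a \emph{tautological section} $\sigma: M \to P \times_G V$, $[p] \mapsto [p, \phi(p)] = [p,p]$. Setting $\tilde Y := \pi^{-1}(Y)$, a direct check from the definition of the associated bundle---using $G$-invariance of $\tilde Y$---shows that $\sigma^{-1}(P \times_G \tilde Y) = Y$.

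The technical heart of the argument is showing that $\sigma$ is motivically transversal to $P \times_G \tilde Y$. I would choose a $G$-equivariant proper normal crossing extension $\bar f: Z \to V$ of $\tilde Y \hookrightarrow V$, available by equivariant resolution of singularities. The divisor components $D_i \subset Z$ are automatically $G$-invariant, so the base change $P \times_G Z \to P \times_G V$ is a PNC of $P \times_G \tilde Y$ in $P \times_G V$ with divisor components $P \times_G D_i$. Checking transversality of $\sigma$ to each $P \times_G D_I$ can be carried out by lifting to the equivariant cover $P \to M$: the section $\sigma$ lifts to the $G$-equivariant map $\Phi: P \to P \times V$, $p \mapsto (p, p)$, and transversality of $\Phi$ to $P \times D_I \subset P \times V$ is an elementary graph-vs-product calculation that holds independently of the geometry of $D_I$.

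Combining the Proposition preceding Corollary \ref{locus}---which gives $\mS(P \times_G \tilde Y, P \times_G V) = a(\mS_G(\tilde Y, V))$---with the transversality result of Section \ref{sec:trans} applied to $\sigma$ yields
\[
\mS(Y, M) \;=\; \sigma^* \, a\bigl(\mS_G(\pi^{-1}(Y), V)\bigr).
\]
It remains to identify $\sigma^* \circ a$ with the Kirwan map $\kappa = s_0^* \circ a$. But $\pi_A : P \times_G V \to M$ is a vector bundle, so $\pi_A^*$ is an isomorphism in K-theory and every section pulls back to its inverse; in particular $\sigma^* = s_0^*$, from which the theorem follows.

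The main obstacle I anticipate is the motivic transversality step, and specifically the need for an \emph{equivariant} PNC---without $G$-invariance of the divisors $D_i$, the object $P \times_G D_i$ is not even defined. Once equivariance is guaranteed, transversality itself is free of cost. A secondary nuisance is the case when $Y$ is singular, in which case the argument must be combined with the stratification and additivity built into the alternating definition \eqref{alternating-definition} to reduce to smooth pieces; the transversality produced above is preserved under such stratifications, so the reduction is routine.
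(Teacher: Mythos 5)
Your proof is correct and follows the same high-level plan as the paper's: realize $Y$ as the $\tilde Y$-locus of the tautological (universal) section $\sigma_j : M \to P \times_G V$, apply Corollary~\ref{locus}, and identify $\sigma_j^* \circ a$ with the Kirwan map $\kappa$ via K-theoretic homotopy invariance of the vector bundle $P\times_G V \to M$. The genuine difference is in how motivic transversality of $\sigma_j$ to $P\times_G\tilde Y$ is established. The paper argues locally: over a trivializing open $W\subset M$ it writes $\sigma_j|_W$ as the graph of a local section $\varphi:W\to P\subset V$, which is transversal to the $G$-orbits in $V$, and then invokes Proposition~\ref{trans2allorbit} to conclude motivic transversality to every $G$-invariant subvariety. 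You argue globally: you construct a $G$-equivariant PNC $Z\to V$ of $\tilde Y$ by equivariant resolution (connectedness of $G$ is what keeps the divisor components individually invariant rather than merely permuted), descend it to a PNC $P\times_G Z\to P\times_G V$, lift $\sigma_j$ to $\Phi:P\to P\times V$, $p\mapsto (p,p)$, and observe that the graph of the open inclusion $j:P\hookrightarrow V$ is automatically transversal to every $P\times D_I$ because the free $P$-factor absorbs any discrepancy. Both routes ultimately rest on resolution, but you invoke it directly whereas the paper packages it inside Proposition~\ref{trans2allorbit}; your version is more explicit and self-contained, at the cost of an extra stratification step when $Y$ is singular, which the paper's formulation handles uniformly since its Proposition allows an arbitrary $G$-invariant subset $Z\subset V$.
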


To prove this theorem we apply Corollary \ref{locus} to the \emph{universal section}: The inclusion $j:P\to V$ is obviously $G$-equivariant therefore induces a section $\sigma_j:M\to P\times_G V$.
In other words $\sigma_j(m)=[p,p]$ for any $p$ with $\pi(p)=m$.
\begin{example}\label{ex:grassmann} \rm
  For $V=\Hom(\C^k,\C^n)$ with the $G=\GL_k(\C)$-action we can choose $P:=\Sigma^0(\C^k,\C^n)$, the set of injective maps, so $P/G=\gr_k(\C^n)$. Then $P\times_G V$ is the vector bundle $\Hom(\gamma^k,\C^n)$ for the tautological subbundle $\gamma^k$ and $\sigma_j$ is the section expressing the fact that $\gamma^k$ is a subbundle of the trivial bundle of rank $n$. More generally see Remark \ref{rem:GIT}.

\end{example}

The reason we call $\sigma_j$ universal is the following simple observation: For any $Y\subset M$ the $\pi^{-1}(Y)$-locus of $\sigma_j$ is $Y$. Therefore Theorem \ref{te:git} is a consequence of the following:
\begin{proposition} The universal section $\sigma_j$ is motivically transversal to $P\times_G \pi^{-1}(Y)$ for any $Y\subset M$.
\end{proposition}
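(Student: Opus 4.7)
The plan is to transport a $G$-equivariant PNC of $\pi^{-1}(Y)\hookrightarrow V$ to one of $P\times_G\pi^{-1}(Y)\hookrightarrow P\times_G V$ via the associated-bundle functor, and then verify transversality of $\sigma_j$ to each stratum after pulling back along the principal bundle $\pi\colon P\to M$. The key observation driving the argument is that since $P$ is open in the $G$-vector space $V$, one has $T_pP=T_pV=V$ at every $p\in P$, and the pullback of $\sigma_j$ along $\pi$ is the ``diagonal'' section $\tilde\sigma\colon p\mapsto(p,p)$ of the trivial bundle $P\times V\to P$.

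First I would set $W:=\pi^{-1}(Y)\subset V$; this is $G$-invariant because $Y\subset M=P/G$. By $G$-equivariant resolution of singularities I would produce a $G$-equivariant PNC $\bar{f}\colon Z\to V$ of $W\hookrightarrow V$, with $G$-invariant SNC exceptional divisor $D=\bigcup_{i=1}^s D_i$ (if $Y$ is not smooth or not locally closed, first stratify by smooth $G$-invariant pieces and argue stratum by stratum, as allowed by the extension of motivic transversality to stratified targets mentioned in the text). Applying the associated-bundle construction $P\times_G(-)$ yields $\mathrm{id}_P\times_G\bar{f}\colon P\times_G Z\to P\times_G V$ with divisor $\bigcup_{i=1}^s(P\times_G D_i)$; I then claim this is a PNC of $P\times_G W\hookrightarrow P\times_G V$. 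Smoothness of $P\times_G Z$ is inherited from $Z$ via the free $G$-action on $P\times Z$; properness descends from $\bar{f}$; and the SNC property of $\bigcup_i(P\times_G D_i)$ descends along the smooth principal $G$-bundle $P\times Z\to P\times_G Z$ from the evident SNC property of $\bigcup_i(P\times D_i)$ upstairs.

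It remains to check that $\sigma_j$ is transversal to every stratum $P\times_G D_I$, where $D_I:=\bigcap_{i\in I}D_i$. Since $\pi$ is a surjective submersion I test transversality after Cartesian pull-back along $\pi$: the bundle $P\times_G V\to M$ pulls back to the trivial $P\times V\to P$, the section $\sigma_j$ pulls back to $\tilde\sigma$, and $P\times_G D_I$ pulls back to $P\times D_I$. At any intersection point $(p,p)$ with $p\in P\cap D_I$, the image of $d\tilde\sigma$ is the diagonal $\{(u,u):u\in V\}$ (using $T_pP=V$), which together with $T_{(p,p)}(P\times D_I)=V\oplus T_pD_I$ spans all of $V\oplus V$: for any $(c,d)\in V\oplus V$ and any $b\in T_pD_I$ one has the decomposition $(c,d)=(d-b,d-b)+(b+c-d,b)$. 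Hence $\tilde\sigma$ is transversal to $P\times D_I$, and by descent along the submersion $\pi$, $\sigma_j$ is transversal to $P\times_G D_I$.

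The main obstacle in executing this plan is the careful verification that the associated-bundle construction preserves the four PNC axioms --- especially the simple normal crossings condition on the exceptional divisor, which needs a small argument via the smooth principal $G$-bundle quotient. The transversality calculation itself is essentially trivial: all the hard work of motivic transversality is absorbed into the $G$-equivariant choice of resolution, and the universal section $\sigma_j$ is transversal to the resulting strata ``for free'', by virtue of being diagonal and of $P$ being open in $V$.
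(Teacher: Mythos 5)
Your proof is correct but takes a genuinely different route from the paper's. The paper argues locally: over an open $W\subset M$ trivializing $P\times_G V$, a local section $\varphi:W\to P$ transversal to the $G$-orbits identifies $\sigma_j|_W$ with the graph of $j\circ\varphi$, and then Proposition \ref{trans2allorbit} applied to $\varphi$ finishes immediately. You work globally: you produce a $G$-equivariant PNC of $\pi^{-1}(Y)\hookrightarrow V$ by equivariant resolution, push it through $P\times_G(-)$ (checking that smoothness, properness, and the SNC property descend along the free quotient $P\times Z\to P\times_G Z$), and verify transversality to each stratum by pulling back along $\pi$, where $\sigma_j$ becomes the tautological diagonal $p\mapsto(p,p)$ and the diagonal of $V\oplus V$ spans everything together with $V\oplus W'$ for any subspace $W'$. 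The paper's route is shorter and simply reuses an already-proved lemma; yours is more self-contained and lays bare the geometric mechanism (the universal section is the diagonal, hence transversal ``for free''). One small notational slip: the stratum $D_I$ lives in the resolution $Z$, not in $V$, so the subspace you write as $T_pD_I$ in the spanning calculation should be $\mathrm{Im}(d\bar f_d|_{T_dD_I})\subset V$; since the diagonal together with $V\oplus\{0\}$ already spans $V\oplus V$, the conclusion is unaffected.
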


In fact we can replace $\pi^{-1}(Y)$ with any $G$-invariant (algebraic) subset  $Z\subset V$.
\begin{proof} The statement is local so it is enough to study the restriction of $\sigma_j$ to an open subset of $M$ over which $P\times_G V$ is trivial. A local trivialization can be obtained by a $\varphi:W\to P$ local section of $P$ which is transversal to the fibers i.e. to the $G$-orbits. In this local trivialization the section $\sigma_j|_W$ is the graph of the map $j\varphi:W\to V$, therefore  $\sigma_j$ is motivically transversal to $P\times_G Z$ if $\varphi$ is motivically transversal to $Z\subset V$, which is implied by Proposition \ref{trans2allorbit}.
\end{proof}

With the same argument we can obtain an equivariant version of Theorem \ref{te:git}:
\begin{theorem} \label{te:eq-git}
Let $V$ be a $G\times H$-vector space and assume that $P\subset V$ is an open $G\times H$-invariant subset such that $\pi:P\to P/G$ is a principal $G$-bundle over the smooth $M:=P/G$. Let $Y\subset M$ be $H$-invariant. Then
\[\mS_H(Y, M)=\kappa_H(\mS_{G\times H}(\pi^{-1}(Y), V)),\]
where $\kappa_H$ is the equivariant Kirwan map: the composition of the pull back $K_H(P\times_G V)\to K_H(M)$ via the zero section and the association map $K_{G\times H}(V)\to K_H(P\times_G V)$.
\end{theorem}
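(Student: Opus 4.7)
The plan is to follow the argument for Theorem \ref{te:git} essentially verbatim, verifying $H$-equivariance at each step. Since all the constructions (association map, Kirwan map, universal section, PNC's) are natural with respect to any extra commuting group action, the original proof should adapt once the right equivariance is built in.

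First I would set up the universal section in the equivariant context. Because $P$ is a $G\times H$-invariant open subset of $V$, the inclusion $j:P\hookrightarrow V$ is $G\times H$-equivariant, so the induced section $\sigma_j:M\to P\times_G V$ is $H$-equivariant (the $H$-action on $M=P/G$ and on $P\times_G V$ both come from the $H$-action on $V$, which commutes with $G$). Likewise $P\times_G\pi^{-1}(Y)\subset P\times_G V$ is $H$-invariant since $\pi^{-1}(Y)$ is $G\times H$-invariant, and by construction the $\pi^{-1}(Y)$-locus of $\sigma_j$ is $Y$ itself.

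Next I would formulate and establish an $H$-equivariant analogue of Corollary \ref{locus}: if $A$ is a smooth $G\times H$-variety, $Z\subset A$ a $G\times H$-invariant subvariety, and $\sigma:M\to P\times_G A$ an $H$-equivariant section $H$-motivically transversal to $P\times_G Z$, then
\[
\mS_H(Z(\sigma),M)=\sigma^*a_H(\mS_{G\times H}(Z,A)).
\]
The proof is formally identical to that of Corollary \ref{locus}: Definition \ref{de:mc} and the transversal pull-back theorem for motivic Segre classes go through verbatim using $G\times H$-equivariant PNC's, since $\lambda_y(T^*(-))$, push-forwards along proper equivariant maps, and association are all natural in $H$.

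The main step, and I expect the only real technical one, is to verify that $\sigma_j$ is $H$-motivically transversal to $P\times_G\pi^{-1}(Y)$. Following the non-equivariant argument at the end of Section \ref{sec-laci}, the question is local on $M$; over an open set $W\subset M$ trivializing the principal $G$-bundle $\pi$, a local section $\varphi:W\to P$ transverse to all $G$-orbits identifies $\sigma_j|_W$ with the graph of $j\circ\varphi:W\to V$, which by Proposition \ref{trans2allorbit} is motivically transversal to every $G$-invariant subvariety of $V$, in particular to $\pi^{-1}(Y)$. To upgrade this to $H$-motivic transversality one chooses a $G\times H$-equivariant PNC $\bar f:Y'\to V$ for $\pi^{-1}(Y)\hookrightarrow V$ (such a PNC exists by equivariant resolution applied to the $G\times H$-action); forming $P\times_G Y'\to P\times_G V$ gives an $H$-equivariant PNC for $P\times_G\pi^{-1}(Y)\hookrightarrow P\times_G V$, and the local-slice argument shows $\sigma_j$ is transverse to every stratum of its divisor. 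The main obstacle here is keeping track of the slice construction equivariantly, but this is routine because $H$ and $G$ commute and $\pi$ is a locally trivial principal $G$-bundle.

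Finally, applying the $H$-equivariant Corollary \ref{locus} to $\sigma_j$ and unwinding the definitions gives
\[
\mS_H(Y,M)=\sigma_j^*a_H(\mS_{G\times H}(\pi^{-1}(Y),V))=\kappa_H(\mS_{G\times H}(\pi^{-1}(Y),V)),
\]
the desired identity.
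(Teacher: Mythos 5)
Your proposal is correct and takes essentially the same approach as the paper, which simply states that Theorem \ref{te:eq-git} follows ``with the same argument'' used for Theorem \ref{te:git}. You flesh out exactly that argument: the universal section $\sigma_j$ inherits $H$-equivariance from the $G\times H$-invariance of $P\hookrightarrow V$, Corollary \ref{locus} and the preceding Proposition carry over verbatim with the extra $H$-action, and the motivic transversality of $\sigma_j$ is checked on local slices using Proposition \ref{trans2allorbit} together with a $G\times H$-equivariant PNC. One small point worth noting explicitly (which you use implicitly): a local slice $\varphi:W\to P$ transversal to the $G$-orbits is automatically transversal to the larger $G\times H$-orbits, since $T_x(G\cdot x)\subseteq T_x((G\times H)\cdot x)$, so the hypothesis of the $G\times H$-version of Proposition \ref{trans2allorbit} is indeed met.
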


As a consequence we proved argument (e) in Section \ref{MCmain} (cf. Remark \ref{rem:GIT}): the calculation of motivic Chern classes of matrix Schubert cells leads directly to the calculation of motivic Chern classes of ordinary Schubert cells.
The formulas for motivic Chern classes are modified by the respective total Chern classes of the ambient spaces, while the formulas for the motivic Segre classes are identical (cf. Remark \ref{segre}).

\end{document}